\documentclass[11pt]{amsart}
\usepackage{amsmath}
\usepackage{amssymb}
\usepackage{amsthm}
\usepackage{latexsym}
\usepackage{amscd}
\usepackage[all]{xy}

\usepackage{hyperref}
\usepackage{enumerate}

\setlength{\unitlength}{1cm}
\setlength{\topmargin}{0cm}
\setlength{\textheight}{22cm}
\setlength{\oddsidemargin}{1cm}
\setlength{\textwidth}{14cm}
\setlength{\voffset}{-1cm}

\newtheorem{thm}{Theorem}[section]
\newtheorem{cor}[thm]{Corollary}
\newtheorem{lem}[thm]{Lemma}
\newtheorem{prop}[thm]{Proposition}
\newtheorem{defn}[thm]{Definition}

\newtheorem{rem}[thm]{Remark}

\newtheorem{ex}[thm]{Example}

\providecommand{\norm}[1]{\left\| #1 \right\|}
\newcommand{\enuma}[1]{\begin{enumerate}[\textup{(}a\textup{)}] {#1} \end{enumerate}}

\newcommand{\bW}{{\mathbf W}}
\newcommand{\bI}{{\mathbf I}}

\newcommand{\mc}{\mathcal}
\newcommand{\mf}{\mathfrak}

\newcommand{\Z}{\mathbb Z}
\newcommand{\Q}{\mathbb Q}
\newcommand{\R}{\mathbb R}
\newcommand{\C}{\mathbb C}

\newcommand{\vareps}{\epsilon}

\newcommand{\matje}[4]{\left(\begin{smallmatrix} #1 & #2 \\ 
#3 & #4 \end{smallmatrix}\right)}

\def\ess{{\rm ess}}
\def\Irr{{\rm Irr}}
\def\Gal{{\rm Gal}}
\def\GL{{\rm GL}}
\def\Mat{{\rm M}}
\def\MP{{\rm MP}}

\def\PGL{{\rm PGL}}
\def\SL{{\rm SL}}
\def\JL{{\rm JL}}
\def\rL{{\rm L}}
\def\rN{{\rm N}}
\def\Nrd{{\rm Nrd}}
\def\St{{\rm St}}

\def\rec{{\rm rec}}
\def\der{{\rm der}}

\def\Ind{{\rm Ind}}
\def\cInd{{\rm c}\!-\!{\Ind}}

\def\Nor{{\rm N}}

\def\ab{{\rm ab}}

\begin{document}

\title{Depth and the local Langlands correspondence}

\author[A.-M. Aubert]{Anne-Marie Aubert}
\address{Institut de Math\'ematiques de Jussieu -- Paris Rive Gauche, 
U.M.R. 7586 du C.N.R.S., U.P.M.C., 4 place Jussieu 75005 Paris, France}
\email{anne-marie.aubert@imj-prg.fr}
\author[P. Baum]{Paul Baum}
\address{Mathematics Department, Pennsylvania State University,  University Park, PA 16802, USA}
\email{baum@math.psu.edu}
\author[R. Plymen]{Roger Plymen}
\address{School of Mathematics, Southampton University, Southampton SO17 1BJ,  England \emph{and} 
School of Mathematics, Manchester University, Manchester M13 9PL, England}
\email{r.j.plymen@soton.ac.uk \quad plymen@manchester.ac.uk}
\author[M. Solleveld]{Maarten Solleveld}
\address{Radboud Universiteit Nijmegen, Heyendaalseweg 135, 6525AJ Nijmegen, the Netherlands}
\email{m.solleveld@science.ru.nl}
\date{\today}
\subjclass[2010]{20G25, 22E50}
\keywords{representation theory, p-adic group, local Langlands program, division algebra}
\thanks{The second author was partially supported by NSF grant DMS-1200475.}

\begin{abstract}   
Let $G$ be an inner form of a general linear group over a non-archimedean local field. 
We prove that the local Langlands correspondence for $G$ preserves depths. We also show
that the local Langlands correspondence for inner forms of special linear groups preserves
the depths of essentially tame Langlands parameters.
\end{abstract}

\maketitle
\tableofcontents

\section{Introduction}

Let $F$ be a non-archimedean local field and let $G$ be a connected reductive 
group over $F$. Let $\Phi(G)$ denote the collection of
equivalence classes of Langlands parameters for $G$, and $\Irr (G)$ the set 
of (isomorphism classes of) irreducible smooth $G$-representations.
A central role in the representation theory of such groups is played by the
local Langlands correspondence (LLC). It is supposed to be a map
\[
\Irr (G) \to \Phi (G)
\]
that enjoys several naturality properties \cite{Bor,Vog}. The LLC should preserve
interesting arithmetic information, like local L-functions and $\epsilon$-factors. 
A lesser-known invariant that makes sense on both sides of the LLC is \emph{depth}. 

The depth of a Langlands parameter $\phi$ is easy to define. 
For $r \in \R_{\geq 0}$ let $\Gal (F_s / F)^r$ be the $r$-th ramification subgroup 
of the absolute Galois group of $F$. Then the depth of $\phi$ is the smallest number 
$d(\phi) \geq 0$ such that $\phi$ is trivial on $\Gal (F_s/F)^r$ for all $r > d(\phi)$.

The depth $d(\pi)$ of an irreducible $G$-representation $\pi$ was defined by Moy and 
Prasad \cite{MoPr1,MoPr2}, in terms of filtrations $P_{x,r} (r \in \R_{\geq 0})$ of 
the parahoric subgroups $P_x \subset G$. On the basis of several examples (see below)
it is reasonable to expect that for many Langlands parameters $\phi \in \Phi (G)$
with L-packet $\Pi_\phi (G) \subset \Irr (G)$ one has
\begin{equation}\label{eq:D}
d(\phi) = d(\pi) \quad \text{ for all } \pi \in \Pi_\phi (G) .
\end{equation}
This relation would be useful for several reasons. Firstly, it allows one to employ 
some counting arguments in the local Langlands correspondence, because (up to 
unramified twists) there are only finitely many irreducible representations and 
Langlands parameters whose depth is at most a specified upper bound. 

Secondly, it would be a step towards a more explicit LLC. One can try to determine 
the groups $P_{x,r} / P_{x,r+\epsilon}$  $(\epsilon > 0$ small) and their 
representations explicitly, and to match them with representations of 
$\Gal (F_s / F) / \Gal (F_s / F)^{r + \epsilon}$.

Thirdly, one can use \eqref{eq:D} as a working hypothesis when trying to establish a 
local Langlands correspondence, to determine whether or not two irreducible 
representations stand a chance of belonging to the same L-packet.
\vspace{2mm}

The most basic case of depth preservation concerns Langlands parameters $\phi \in 
\Phi (G)$ that are trivial on both the inertia group $\bI_F$ and on $\SL_2 (\C)$. 
These can be regarded as Langlands parameters of negative depth. Such a $\phi$ is only 
relevant for $G$ if $G$ is quasi-split and splits over an unramified extension of $F$. 
In that case one can say that an irreducible $G$-representation has negative depth if 
it possesses a nonzero vector fixed by a hyperspecial maximal compact subgroup. 
The Satake isomorphism shows how to each such representation one can associate 
(in a natural way) a Langlands parameter of the above kind.

The $G$-representations of depth zero have been subjected to ample study, see for 
example \cite{GSZ,Mor,DBRe,Moe}. According to Moy--Prasad, an irreducible representation 
has depth zero if and only if it has nonzero vectors fixed by the pro-unipotent radical 
of some parahoric subgroup of $G$. This includes Iwahori-spherical representations and 
Lusztig's unipotent representations \cite{Lus1,Lus2}. A Langlands parameter has depth 
zero if and only if it is trivial on the wild inertia subgroup of the absolute Galois 
group of $F$. For depth zero the equality \eqref{eq:D} is conjectured, and proven in 
certain cases, in \cite{DBRe}. It fits very well with the aforementioned work of Lusztig.

In positive depth there is the result of Yu \cite[\S 7.10]{Yu2}, who proved \eqref{eq:D}
for unramified tori. For $\GL_n (F)$, \eqref{eq:D} was claimed in \cite[\S 2.3.6]{Yu1} 
and proved in \cite[Proposition 4.5]{ABPS3}. For $\mathrm{GSp}_4 (F)$, \eqref{eq:D} is 
proved in \cite[\S~10]{Gan2}. We refer to \cite{GrRe,Ree,ReYu} for some interesting 
examples of positive depth Langlands parameters and supercuspidal representations. 
Most of these examples satisfy \eqref{eq:D}, but in \cite[\S 7.4--7.5]{ReYu} some 
particular cases are mentioned in which \eqref{eq:D} does not hold. All these 
counterexamples appear in small residual characteristics.
  So it remains to be seen in which generality the local Langlands 
correspondence will preserve depths.
\vspace{2mm}

In this paper we will prove that the local Langlands correspondence preserves depth
for the inner forms of $\GL_n (F)$. In a few non-split cases, this was done before 
in \cite{LaRa}. For inner forms of $\SL_n (F)$, we will prove an inequality between
depths, which becomes an equality if the Langlands parameter is essentially tame
in the sense that it maps the wild inertia group to a maximal torus of $\PGL_n (\C)$.
Every Langlands parameter for an inner form of $\SL_n (F)$ is essentially tame if 
the residual characteristic of $F$ does not divide $n$.

Let $D$ be a division algebra with centre $F$, of dimension $d^2$ over $F$.
Then $G = \GL_m (D)$ is an inner form of $\GL_n(F)$ with $n = dm$. There is a 
reduced norm map Nrd$: \GL_m (D) \to F^\times$ and the derived group 
$G_{\der} := \ker (\text{Nrd} : G \to F^\times)$ is an inner form of $\SL_n (F)$.
Every inner form of $\GL_n (F)$ or $\SL_n (F)$ is isomorphic to one of this kind.

The main steps in the proof of our depth-preservation theorem are:
\begin{itemize}
\item With the Langlands classification one reduces the problem to essentially
square-integrable representations and elliptic Langlands parameters.
\item Express the depth in terms of $\epsilon$-factors and conductors. This is a
technical step which involves detailed knowledge of the representation theory
of $G$. Here it is convenient to use an alternative but equivalent
version of depth, the normalized level of an irreducible $G$-representation.
\item Show that the Jacquet--Langlands correspondence for $G = \GL_m (D)$
preserves $\epsilon$-factors. Since the LLC for $\GL_m (D)$ is defined as a 
composition of the Jacquet--Langlands correspondence with the LLC for $\GL_n (F)$  
and the latter is known to preserve
$\epsilon$-factors, this proves depth-preservation for inner forms of $\GL_n (F)$.
\item Relate the depth for $G_\der$ to depth for $G$. For irreducible 
representations nothing changes, but for Langlands parameters the depth can
decrease if one replaces the dual group $\GL_n (\C)$ by $\PGL_n (\C)$. 
Using several properties of the Artin reciprocity map, we show that such
a decrease in depth cannot occur if the Langlands parameter is essentially tame.
\end{itemize}

This paper develops results presented by the second author in a lecture at
the 2013 Arbeitstagung.

\smallskip
\textbf{Acknowledgements.}
The authors wish to thank Vincent S\'echerre for some helpful explanations
on the construction of simple types for $\GL_m(D)$, Mark Reeder for
pointing out some examples where the depth is not preserved, Wilhelm Zink
for explaining properties of the Artin reciprocity map, and Paul Broussous
for providing the reference \cite{BaBr}, which has allowed a substantial 
simplification of the proof of Proposition~\ref{prop:BF} from a previous version.

\section{The local Langlands correspondence for inner forms of $\GL_n(F)$} 
\label{sec:LLCGL}

\subsection{The statement of the correspondence} \

The local Langlands correspondence for supercuspidal representations of 
$\GL_n (F)$ was established in the important papers \cite{LRS,HaTa,Hen2}. 
Together with the Jacquet--Langlands correspondence this provides the LLC for 
essentially square-integrable representations of inner forms $G = \GL_m (D)$ of 
$\GL_n (F)$. It is extended to all irreducible $G$-representations via the Zelevinsky
classification \cite{Zel,DKV}, see \cite{HiSa,ABPS1}. For these groups every 
L-packet is a singleton and the LLC is a canonical bijective map
\begin{equation}\label{eq:1.1}
\mathrm{rec}_{D,m} : \Irr (\GL_m (D)) \to \Phi (\GL_m (D)) . 
\end{equation}
A remarkable aspect of Langlands' conjectures \cite{Vog} is that it is better
to consider not just one reductive group at a time, but all inner forms of
a given group simultaneously. Inner forms share the same
Langlands dual group, so in \eqref{eq:1.1} the right hand side is the same
for all inner forms $G$ of the given group. Then one can turn
\eqref{eq:1.1} into a bijection by defining a suitable equivalence
relation on
the set of inner forms and taking the corresponding union of the sets
$\Irr (G)$ on the left hand side (see Theorem~\ref{thm:1.1} below). 

We define the equivalence classes of such inner forms to be in bijection
with the isomorphism classes of central simple $F$-algebras of dimension $n^2$ 
via $\Mat_m (D) \mapsto \GL_m (D)$, respectively $\Mat_m (D) \mapsto \GL_m (D)_{\der}$.

As Langlands dual group of $G$ we take $\GL_n (\C)$. To deal
with inner forms it is advantageous to consider the conjugation action of
$\SL_n (\C)$ on these two groups. It induces a natural action of $\SL_n (\C)$
on the collection of Langlands parameters for $\GL_n (F)$.
For any such parameter $\phi$ we can define
\begin{equation}\label{eq:1.2}
C(\phi) = Z_{\SL_n (\C)}(\text{im } \phi) , \quad\text{and}\quad
\mc S_\phi = C(\phi) / C(\phi)^\circ.
\end{equation}
Notice that the centralizers are taken in $\SL_n (\C)$ and not in the
Langlands dual group. 

Via the Langlands correspondence the non-trivial irreducible representations of
$\mc S_\phi$ are associated to irreducible representations of
non-split inner forms of $\GL_n (F)$.
For example, consider a Langlands parameter $\phi$ for $\GL_2 (F)$ which is
elliptic, that is, whose image is not contained in any torus of $\GL_2 (\C)$.
Then $\mc S_\phi = Z(\SL_2 (\C)) \cong \{\pm 1\}$. The pair
$(\phi,\mathrm{triv}_{\mc S_\phi})$
parametrizes an essentially square-integrable representation of $\GL_2 (F)$ and
$(\phi,\mathrm{sgn}_{\mc S_\phi})$ parametrizes an irreducible
representation of the inner
form $D^\times$, where $D$ denotes a noncommutative division algebra of
dimension $4$ over $F$.

The enhanced version of the local Langlands correspondence for all inner forms
of general linear groups over nonarchimedean local fields says:

\begin{thm}\label{thm:1.1} \cite[Theorem~1.1]{ABPS3}  \\
There is a canonical bijection between:
\begin{itemize}
\item pairs $(G,\pi)$ with $\pi \in \Irr (G)$ and $G$ an inner form of $\GL_n (F)$,
considered up to equivalence;
\item $\GL_n (\C)$-conjugacy classes of pairs $(\phi,\rho)$ with 
$\phi \in \Phi (\GL_n (F))$ and $\rho \in \Irr (\mc S_\phi)$. 
\end{itemize}
\end{thm}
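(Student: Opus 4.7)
The plan is to assemble the bijection from the LLC \eqref{eq:1.1} for each individual inner form. For each equivalence class of inner form $G = \GL_m(D)$ one has the bijection $\rec_{D,m}: \Irr(G) \isom \Phi(G)$, where $\Phi(G)$ is the set of Langlands parameters relevant to $G$. It therefore suffices to produce, for each $\phi \in \Phi(\GL_n(F))$, a canonical bijection between $\Irr(\mc S_\phi)$ and the set of equivalence classes of inner forms of $\GL_n(F)$ to which $\phi$ is relevant.

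\textbf{Computing $\mc S_\phi$.} Decompose $\phi = \bigoplus_i \phi_i^{\oplus e_i}$ into pairwise inequivalent irreducibles with $\dim \phi_i = d_i$, so $\sum_i e_i d_i = n$. Then $Z_{\GL_n(\C)}(\mathrm{im}\,\phi) = \prod_i \GL_{e_i}(\C)$ (a tuple $(g_i)_i$ acts on $\phi_i^{\oplus e_i}$ as $g_i \otimes I_{d_i}$), and intersecting with $\SL_n(\C)$ yields the short exact sequence
\[
1 \to C(\phi) \to \prod_i \GL_{e_i}(\C) \xrightarrow{f} \C^\times \to 1, \qquad f\bigl((g_i)_i\bigr) = \prod_i \det(g_i)^{d_i}.
\]
Setting $\delta := \gcd_i d_i$ and writing $d_i = \delta d_i'$ with $\gcd_i d_i' = 1$, one factors $f = (\,\cdot\,)^\delta \circ h$, where $h\bigl((g_i)_i\bigr) = \prod_i \det(g_i)^{d_i'}$. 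The long exact sequence of homotopy shows $\ker(h)$ is connected, so $C(\phi)^\circ = \ker(h)$ and $\mc S_\phi = h^{-1}(\mu_\delta)/\ker(h) \cong \mu_\delta$. The scalar embedding $\mu_n = Z(\SL_n(\C)) \hookrightarrow C(\phi)$, $\zeta \mapsto (\zeta I_{e_i})_i$, then descends to the surjection $\mu_n \twoheadrightarrow \mu_\delta$, $\zeta \mapsto \zeta^{n/\delta}$, whose kernel is $\mu_{n/\delta}$. Dually, $\Irr(\mc S_\phi)$ is identified with the subgroup of $\Irr(\mu_n)$ of characters trivial on $\mu_{n/\delta}$.

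\textbf{Matching with inner forms.} By Kottwitz, $H^1(F, \PGL_n) \cong \Irr(Z(\SL_n(\C))^{W_F}) = \Irr(\mu_n)$ canonically identifies equivalence classes of inner forms of $\GL_n(F)$ with $\Irr(\mu_n)$, compatibly with $\mathrm{Br}(F)[n] \cong \Z/n$. Under this, an inner form $\GL_m(D)$ with $\deg(D) = d$ corresponds to a character of $\mu_n$ of exact order $d$; thus characters trivial on $\mu_{n/\delta}$ correspond precisely to inner forms with $d \mid \delta$. On the other hand, the smallest Levi of $\GL_n(\C)$ containing $\mathrm{im}(\phi)$ is $\prod_i \GL_{d_i}(\C)^{e_i}$, which corresponds to a Levi subgroup of $\GL_m(D)$ iff every block size $d_i$ is divisible by $d$, equivalently $d \mid \delta$. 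Hence the image of $\Irr(\mc S_\phi) \hookrightarrow \Irr(\mu_n)$ matches precisely the set of inner forms to which $\phi$ is relevant. Verifying that the Kottwitz normalisation agrees with the explicit scalar map $\mu_n \to \mc S_\phi$ — that is, that the Brauer-theoretic indexing and the Galois-cohomological indexing of inner forms coincide on this image — is the main technical step.

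\textbf{Assembly.} To $(\phi, \rho)$ associate $(G, \pi)$ where $G$ is the inner form corresponding under Kottwitz to the pullback of $\rho$ along $\mu_n \twoheadrightarrow \mc S_\phi$, and $\pi := \rec_{D,m}^{-1}(\phi)$ (well-defined since $\phi$ is automatically relevant to $G$). Conversely, $(G, \pi)$ yields $\phi := \rec_{D,m}(\pi)$ and $\rho$, the unique character of $\mc S_\phi$ whose pullback to $\mu_n$ is the Kottwitz character of $G$; this $\rho$ exists precisely because the relevance of $\phi$ to $G$ forces that character to be trivial on $\mu_{n/\delta}$. The two maps are mutually inverse and descend to $\GL_n(\C)$-conjugacy classes, because $\mu_n$ is central in $\SL_n(\C)$ so the composite $\mu_n \hookrightarrow C(\phi) \twoheadrightarrow \mc S_\phi$ is independent of the chosen representative of $\phi$.
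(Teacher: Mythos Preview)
The paper does not actually prove Theorem~\ref{thm:1.1}; it is quoted verbatim from \cite[Theorem~1.1]{ABPS3} and no argument is given here. So there is no ``paper's own proof'' to compare against.

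That said, your proposal is a correct outline of the argument, and it is essentially the one that underlies the cited reference: compute $\mc S_\phi$ explicitly from the isotypic decomposition of $\phi$, identify it with $\mu_\delta$ where $\delta = \gcd_i \dim\phi_i$, and then match $\Irr(\mc S_\phi)$ with the subset of inner forms for which $\phi$ is relevant via the Kottwitz isomorphism. Your connectedness argument for $\ker h$ (via surjectivity of $\pi_1(\prod_i \GL_{e_i}(\C)) \to \pi_1(\C^\times)$, which holds precisely because $\gcd_i d'_i = 1$) is the right mechanism, and the relevance criterion ``$d \mid \delta$'' is exactly the content of the compatibility between Levi subgroups of $\GL_m(D)$ and block Levis of $\GL_n(\C)$.

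The one place you correctly flag as the main technical step --- checking that the canonical map $Z(\SL_n(\C)) \to \mc S_\phi$ agrees, after dualising, with the Kottwitz parametrisation of inner forms --- is indeed where the care lies, and you do not carry it out. In \cite{ABPS3} this is handled by unwinding Kottwitz's isomorphism $H^1(F,\mathrm{PGL}_n) \cong \Irr(Z(\SL_n(\C)))$ and the Hasse invariant explicitly. Your sketch would be complete once that verification is supplied; without it the word ``canonical'' in the statement is not yet justified.
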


Via the Kottwitz isomorphism \cite[Proposition 6.4]{Kot} every character of $Z(\SL_n (\C))$
determines a central simple $F$-algebra $\Mat_m (D)$. As $Z(\SL_n (\C)) \subset C(\phi)$, 
for any Langlands parameter as above a character of $\mc S_\phi$ determines an
inner form $\GL_m (D)$ of $\GL_n (F)$. In contrast with the
usual LLC, our L-packets for inner forms of general linear groups need not be singletons. 
To be precise, the packet $\Pi_\phi$ contains the unique representation 
$\mathrm{rec}_{D,m}^{-1}(\phi)$ of $G = \GL_m (D)$ if $\phi$ is relevant for $G$, 
and no $G$-representations otherwise.\\[2mm]

\subsection{The Jacquet--Langlands correspondence} \

A representation $\pi$ of $G$ is called essentially square-integrable
if $\pi \big|_{G_{\der}}$ is square-integrable and there exists an unramified 
character $\chi$ of $G$ such that $\pi \otimes \chi$ is unitary. We denote the set of 
(equivalence classes of) irreducible essentially square-integrable $G$-representations 
by $\Irr_{\ess L^2}(G)$. There is a natural bijection between 
$\Irr_{\ess L^2}(\GL_n (F))$ and $\Irr_{\ess L^2}(\GL_m (D))$, discovered first for 
$\GL_2 (F)$ by Jacquet and Langlands \cite{JaLa}. The local Langlands correspondence 
for $\GL_m (D)$ is constructed with the help of the Jacquet--Langlands correspondence. 
Here we recall some useful properties of the latter correspondence.

\begin{thm}\label{thm:JL}
Let $\GL_m (D)$ be an inner form of $\GL_n (F)$. There exists a canonical bijection
\[
\JL : \Irr_{\ess L^2}(\GL_n (F)) \to \Irr_{\ess L^2}(\GL_m (D))
\]
with the following properties:
\enuma{
\item There is a canonical identification of the semisimple elliptic conjugacy
classes in $\GL_n (F)$ with those in $\GL_m (D)$. Let $g \in \GL_n (F)$ and $g' \in
\GL_m (D)$ be semisimple elliptic elements in the same conjugacy class and
let $\theta_\pi$ be the character of $\pi \in \Irr_{\ess L^2}(\GL_n (F))$. Then 
\[
(-1)^n \theta_\pi (g) = (-1)^m \theta_{\JL (\pi)} (g') . 
\]
\item $\JL$ preserves twists with characters of $F^\times$: 
\[
\JL (\pi \otimes \chi \circ \det) = \JL (\pi) \otimes \chi \circ \mathrm{Nrd} . 
\]
\item $\JL$ respects contragredients: $\JL (\pi^\vee) = \JL (\pi)^\vee$.
\item Let $P'$ be a standard parabolic subgroup of $\GL_m (D)$, with Levi factor 
$M' = \prod_i \GL_{m_i}(D)$. Let $P$ be the corresponding standard parabolic
subgroup of $\GL_n (F)$, with Levi factor $M = \prod_i \GL_{d m_i}(F)$. Then the 
Jacquet modules $r_P^{\GL_n (F)}(\pi)$ and $r_{P'}^{\GL_m (D)}(\JL (\pi))$ are 
either both zero or both irreducible and essentially square-integrable. 
In the latter case
\[
\JL \big( r_P^{\GL_n (F)}(\pi) \big) = r_{P'}^{\GL_m (D)}(\JL (\pi)) .
\]
In other words, $\JL$ and its version for $M$ and $M'$ respect Jacquet restriction.
\item $\JL$ preserves supercuspidality.
\item $\JL (\St_{\GL_n (F)}) = \St_{\GL_m (D)}$, where $\St_G$ denotes the 
Steinberg representation of $G$.
\item $\JL$ preserves $\gamma$-factors: 
\[
\gamma (s,\JL (\pi),\psi) = \gamma (s,\pi,\psi) \quad 
\text{for any nontrivial character } \psi \text{ of } F . 
\]
\item $\JL$ preserves $\rL$-functions: $L(s,\JL (\pi)) = L(s,\pi)$.
\item $\JL$ preserves $\epsilon$-factors: 
$\epsilon (s,\JL (\pi),\psi) = \epsilon (s,\pi,\psi)$.
}
\end{thm}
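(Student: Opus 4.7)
The plan is to treat properties (a)--(f) as the classical content of the Deligne--Kazhdan--Vign\'eras construction of the Jacquet--Langlands correspondence (with (d) due to Badulescu), and to deduce the analytic properties (g)--(i) via Godement--Jacquet theory together with a globalization argument. I would first observe that $\JL$ is characterized uniquely by the character identity (a); this is the definition adopted in \cite{DKV}, where existence is established by comparing the simple trace formulas for $\GL_n(F)$ and $\GL_m(D)$. Properties (b) and (c) then follow by direct manipulation of (a): twisting $\pi$ by $\chi \circ \det$ multiplies $\theta_\pi$ by $\chi \circ \det$, which equals $\chi \circ \Nrd$ on matched elliptic classes, and the bijection of elliptic conjugacy classes commutes with inversion. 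Property (e) follows because supercuspidality is detected by the behavior of the character on a small elliptic neighborhood of the identity, which is an invariant of the identity (a). Property (f) is pinned down by evaluating the Steinberg character on regular elliptic elements and invoking uniqueness.

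For (d), I would invoke Badulescu's compatibility of $\JL$ with Jacquet restriction. The argument uses the realization of essentially square-integrable representations as generalized Steinberg representations attached to Bernstein--Zelevinsky segments of supercuspidals, the explicit matching of such segments under $\JL$ obtained from (a) and (e), and an induction on segment length.

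The substantive part is (g)--(i). Since
\[
\epsilon(s,\pi,\psi) \;=\; \gamma(s,\pi,\psi)\cdot L(s,\pi)/L(1-s,\pi^\vee),
\]
any two of the three preservation statements imply the third. I would attack $\gamma$-factor preservation by globalization: embed $\pi$ into a cuspidal automorphic representation $\Pi$ of $\GL_n(\mathbb{A}_k)$ with $\Pi_v = \pi$ for some place $v$ with $k_v \simeq F$, choose a division algebra over the global field $k$ whose completion at $v$ is $D$ and which is prescribed (unramified or Steinberg) at the other ramified places, and apply the global Jacquet--Langlands correspondence to obtain an automorphic representation $\Pi'$ of the corresponding inner form with $\Pi'_v = \JL(\pi)$. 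The global Godement--Jacquet $L$-functions and $\epsilon$-factors of $\Pi$ and $\Pi'$ coincide by global $\JL$. Arranging the globalization so that $\Pi_w$ and $\Pi'_w$ already match at every place $w \neq v$ (via unramified or Steinberg conditions, both compatible with (a)--(f)), one extracts the local identity $\gamma(s,\pi,\psi) = \gamma(s,\JL(\pi),\psi)$. Property (h) then follows from the explicit form of local $L$-factors of essentially square-integrable representations, which are elementary Euler factors determined by the supercuspidal support and which match on the two sides once (e) and (f) are known; and (i) follows from the displayed relation.

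The main obstacle is bridging the character-theoretic identity (a), which concerns distributions on regular elliptic elements, and the analytic invariants in (g)--(i), which come from zeta integrals. The bridge is the globalization trick, and it requires both the existence of sufficiently controlled globalizations of a prescribed essentially square-integrable local representation and the strength of the global Jacquet--Langlands correspondence to transport global $L$- and $\epsilon$-factors. The latter rests on the Arthur--Clozel--Badulescu comparison of simple trace formulas, and is the deepest ingredient invoked in the proof.
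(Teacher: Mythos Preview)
Your outline is correct and largely parallel to the paper. The paper likewise takes (a) as the characterizing property from \cite{DKV,Bad1}, reads off (b) and (c) from (a), cites Badulescu for (d), deduces (e) from (d) rather than from character germs, and obtains (f) from the $m=1$ case (where $\St_{D^\times}$ is the trivial representation) together with (d). For (g) the paper simply cites \cite{Bad1}, whose proof is precisely the globalization argument you sketch.

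The one place your sketch is thinner than the paper is (h). The paper explicitly flags that the argument for (h) and (i) in \cite[\S B.1]{DKV} is \emph{incorrect}, and replaces it by a direct local computation: writing $\pi'$ as a constituent of a segment induction from a supercuspidal $\sigma$ of $\GL_a(D)$, it shows $L(s,\pi')=1$ on both sides unless $\pi'$ is an unramified twist of the Steinberg representation, and in that remaining case it feeds the already-established $\gamma$-factor identity (g) into \cite[Theorem~7.11.4]{GoJa} to pin down $L(s,\pi')=L(s+(n-1)/2,\chi)$ on both sides. Your phrase ``elementary Euler factors determined by the supercuspidal support \ldots\ which match once (e) and (f) are known'' points in the right direction but understates the input: in the Steinberg-twist case one does not a priori know which factor of the induced $L$-function survives in the essentially square-integrable subquotient on the $\GL_m(D)$ side, and the paper genuinely uses (g), not just (e) and (f), to resolve this. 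With (h) in hand, both you and the paper deduce (i) from (c), (g), (h) via the relation linking $\epsilon$, $\gamma$ and $L$.
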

\begin{proof}
The correspondence, which is in fact characterized by property (a),
is proven over $p$-adic fields in \cite{DKV} and over local fields
of positive characteristic in \cite{Bad1}. 
Properties (b) and (c) are obvious in view of (a). The same goes for property 
(f) in the case $m=1$, because then $\St_{\GL_m (D)}$ is just the trivial 
representation of $D^\times$. For (d) see \cite[\S 5]{Bad1}, in particular
Proposition B. Obviously (d) implies (e). 
Property (f) for $m>1$ follows from (f) for $m=1$ and property (d).
Property (g) was proven over local function fields in \cite[p. 741]{Bad1},
with an argument that also works over $p$-adic fields. 

Properties (h) and (i) were claimed in \cite{DKV}, with the difference that 
the $\epsilon$-factors of $\pi$ and $\JL (\pi)$ are said to agree only up 
to a sign $(-1)^{n+m}$. This sign is due to a convention that does not agree 
with \cite{GoJa}, which we use for the definition of $\epsilon$-factors. 
Unfortunately the argument for (h) and (i) given in \cite[\S B.1]{DKV} 
is incorrect. Instead, we will establish (h) by direct calculation.

Let $\nu_D$ denote the unramified character $g' \mapsto \norm{\mathrm{Nrd}\, 
g'}_F$ of $\GL_m (D)$. Consider any $\pi' \in \Irr_{\ess L^2}(\GL_m (D))$. 
By \cite[\S B.2]{DKV} or \cite[\S 2]{Tad} there exist:
\begin{itemize}
\item integers $a,b,s_\sigma$ such that $ab=m$ and $s_\sigma$ divides $ad$;
\item an irreducible supercuspidal representation $\sigma$ of $\GL_a (D)$,
\end{itemize}
such that $\pi'$ is a consituent of the parabolically induced representation
\begin{equation}\label{eq:indrep}
\Pi' := I_{\GL_{a} (D)^b}^{\GL_m (D)} \Big( 
\nu_D^{s_\sigma \frac{1-b}{2}} \sigma \otimes 
\nu_D^{s_\sigma \frac{3-b}{2}} \sigma \otimes \cdots \otimes 
\nu_D^{s_\sigma \frac{b-1}{2}} \sigma \Big) .
\end{equation}
By \cite[Proposition 2.3]{Jac} the L-function of \eqref{eq:indrep} is the 
product of L-functions of the inducing representations:
\begin{equation}\label{eq:Lprod}
L(s,\Pi') = \prod_{k=1}^b L(s,\nu_D^{s_\sigma (k - (1+b)/2)} \sigma) =
\prod_{k=1}^b L(s + s_\sigma (k - (1+b)/2), \sigma) .
\end{equation}
By definition $L(s,\pi')^{-1}$ is a monic polynomial in $q^{-s}$, and by
\cite[2.7.4]{Jac} it is a factor of the monic polynomial $L(s,\Pi')^{-1}$.
Now there are two cases to be distinguished, depending on whether $\sigma$
is an unramified representation of $D^\times$ or not.

\textbf{Case 1:} $a = 1, b = m$ and $\sigma$ is unramified. \\
There exists an
unramified character $\chi$ of $F^\times$ such that $\sigma = \chi \circ $Nrd.
By \cite[\S B.2]{DKV} or \cite[\S 2]{Tad} \eqref{eq:indrep} only has an 
essentially square-integrable subquotient if $s_\sigma = d$. Then 
$\pi' \cong \St_{\GL_m (D)} \otimes \chi \circ $Nrd, so $\JL^{-1}(\pi') \cong
St_{\GL_n (F)} \otimes \chi \circ \det$. With property (f) this enables us
to compute the $\gamma$-factor. Let $\omega_F$ be a uniformizer of $F$,
$\mf o_F$ the ring of integers and $\mf p_F$ its maximal ideal.
Assume that $\psi$ is trivial on $\mf p_F$ but not on $\mf o_F$. Then
\begin{equation}\label{eq:gammaSt}
\gamma(s,\pi',\psi) = \gamma(s,\St_{\GL_n (F)} \otimes \chi \circ \det,\psi) =
(-1)^n q^{n/2} \frac{1 - q^{-s + (1-n)/2} \chi (\omega_F)}{1 - q^{-s + (1+n)/2} 
\chi (\omega_F)} .
\end{equation}
By \cite[Proposition 4.4]{GoJa}, \eqref{eq:Lprod} becomes
\begin{equation}\label{eq:Lprod2}
\prod_{k=1}^m L(s + d (k - (1+m)/2), \chi \circ \mathrm{Nrd}) = 
\prod_{k=1}^m L(s + d (k - (1+m)/2) + (d-1)/2, \chi ) .
\end{equation}
Now we apply \cite[Theorem 7.11.4]{GoJa}. It is stated only for $\GL_n (F)$,
but the proof with zeros and poles of L-functions goes through because we 
know $\gamma (s,\pi',\psi)$. We find that for the L-function of 
$\pi'$ we need only the factor $k=m$ of \eqref{eq:Lprod2}:
\[
L(s,\pi',\psi) = L(s + (n-1)/2,\chi) = 
(1 - q^{-s + (1-n)/2} \chi (\omega_F) )^{-1} . 
\]
In particular the whole calculation works with $d=1$, so
\begin{equation}\label{eq:LSt}
L(s,\St_{\GL_m (D)} \otimes \chi \circ \mathrm{Nrd}) = 
L(s,\St_{\GL_n (F)} \otimes \chi \circ \det) = L(s + (n-1)/2,\chi) .
\end{equation}

\textbf{Case 2:} all other $\sigma$. \\
Then \cite[Proposition 5.11]{GoJa}
says that $L(s,\sigma \otimes \chi) = 1$ for every unramified character
$\chi$ of $\GL_a (D)$. Hence $L(s,\Pi') = 1$ by \eqref{eq:Lprod}. We observed
above that $L(s,\pi')^{-1}$ is a factor of $L(s,\Pi')$, so $L(s,\pi') = 1$.
Because JL is bijective, $\JL^{-1}(\pi')$ is not an unramified twist of the
Steinberg representation, so $L(s,\JL^{-1}(\pi')) = 1$ as well. This
proves property (h).

In view of the relation
\begin{equation}\label{eq:epsilonLgamma}
\epsilon (s,\pi,\psi) = \gamma (s,\pi,\psi) L(s,\pi) L(1-s,\pi^\vee), 
\end{equation}
(i) follows directy from (c), (g) and (h).
\end{proof}

We record a particular consequence of equations \eqref{eq:gammaSt},
\eqref{eq:LSt} and \eqref{eq:epsilonLgamma}:
\begin{equation}\label{eq:epsilonSt}
\epsilon (s,\St_{\GL_m (D)} \otimes \chi \circ \mathrm{Nrd},\psi) = 
(-1)^{n-1} \epsilon (s,\chi,\psi) = (-1)^{n-1} q^{s-1/2} \chi (\omega_F^{-1}) 
\end{equation}
for any character $\psi$ of $F$ which is trivial on $\mf p_F$ but
not on $\mf o_F$.

\subsection{Depth for Langlands parameters} 
\label{sec:depth} \

Let $F_s$ be a separable closure of $F$ and let $\Gal (F_s / F)$ be the
absolute Galois group of $F$. We recall some properties of its 
ramification groups (with respect to the upper numbering), as defined
in \cite[Remark IV.3.1]{Ser}:
\begin{itemize}
\item $\Gal (F_s/F)^{-1} = \Gal (F_s / F)$ and $\Gal (F_s / F)^0 =
\bI_F$, the inertia group.
\item  For every $l \in \R_{\geq 0}, \; \Gal (F_s / F)^l$ is a compact 
subgroup of $\mathbf \bI_F$. It consists of all $\gamma \in \Gal (F_s / F)$ 
which, for every finite Galois extension $E$ of $F$ contained in $F_s$,
act trivially on the ring $\mf o_E / \mf p_E^{i(l,E)}$ 
(where $i(l,E) \in \Z_{\geq 0}$ can be found with \cite[\S IV.3]{Ser}).
\item $l \in \R_{\geq 0}$ is called a jump of the filtration if
\[
\Gal (F_s / F)^{l+} := \bigcap_{l' > l} \Gal (F_s / F)^{l'}
\]
does not equal $\Gal (F_s / F)^l$. The set of jumps of the filtration is
countably infinite and need not consist of integers.
\end{itemize}

Recall \cite{Bor} that a Langlands parameter for $\GL_m (D)$ is a continuous 
homomorphism 
\[
\phi : \mathbf W_F \times \SL_2 (\C) \to \GL_n (\C)
\]
such that:
\begin{itemize}
\item $\phi (\mathbf W_F)$ consists of semisimple elements;
\item $\phi \big|_{\SL_2 (\C)} : \SL_2 (\C) \to \GL_n (\C)$ is a morphism
of complex algebraic groups;
\item $\phi$ is relevant for $\GL_m (D)$. This means that the conjugacy class of
a Levi subgroup of $\GL_n (\C)$ minimally containing im$(\phi)$ should 
correspond to a conjugacy of class of Levi subgroups of $\GL_m (D)$.
\end{itemize}
We define the depth of such a Langlands parameter as 
\[
d(\phi) := \inf \{ l \geq 0 \mid \Gal (F_s/F)^{l+} \subset \ker \phi \} . 
\]
We say that $\phi\in\Phi(\GL_n(F))$ is \emph{elliptic} if
its image is not contained in any proper Levi subgroup of $\GL_n (\C)$.

Let $\psi$ be a nontrivial character of $F$ and let $c(\psi)$ be the 
largest integer $c$ such that $\psi$ is trivial on $\mf p_F^{-c}$.
The $\epsilon$ factor of $\phi$ (and $\psi$) was defined in \cite{Tat}.
It takes the form
\begin{equation}\label{eq:epsilonPhi}
\epsilon (s,\phi,\psi) = \epsilon (0,\phi,\psi) q^{-s (a(\phi) + 
n c (\psi))} \text{ with } \epsilon (0,\phi,\psi) \in \C^\times .
\end{equation}
Here $a(\phi) \in \Z_{\geq 0}$ is the Artin conductor of $\phi$ 
(called $f(\phi)$ in \cite[\S VI.2]{Ser}). To study $a(\phi)$ it is
convenient to rewrite $\phi$ in terms of the Weil--Deligne group. 
For $\gamma \in \mathbf W_F$ put
\begin{equation}\label{eq:phi0}
\phi_0 (\gamma) = \phi (\gamma,1) \phi \Big( 1,
\matje{\norm{\gamma}^{1/2}}{0}{0}{\norm{\gamma}^{-1/2}} \Big) ,
\end{equation}
so $\phi_0$ is a representation of $\mathbf W_F$ which agrees with
$\phi$ on $\bI_F$. Define $N \in \mf{gl}_n (\C)$ as the nilpotent 
element $\log \phi \big(1, \matje{1}{1}{0}{1} \big)$. Then $(\phi_0,N)$
is the Weil--Deligne representation of $\mathbf W_F \ltimes \C$ 
corresponding to $\phi$. 

Denote the vector space $\C^n$ endowed with the representation $\phi$
by $V$, and write $V_N = \ker (N : V \to V)$. 
By definition \cite[\S 4.1.6]{Tat}
\begin{align}\label{eq:aphi}
& a (\phi) = a(\phi_0) + \dim (V^{\bI_F} / V_N^{\bI_F}) ,\\
& \epsilon (s,\phi,\psi) = \epsilon (0,\phi_0,\psi) \det \Big( -\text{Frob} 
\big|_{V^{\bI_F} / V_N^{\bI_F}} \Big) q^{-s (a(\phi) + n c(\psi))} ,
\end{align}
where Frob denotes a geometric Frobenius element of $\mathbf W_F$.

\begin{lem} \label{lem:phi} 
For any elliptic $\phi \in \Phi (GL_n (F))$
\begin{equation} \label{eqn:depth}
d(\phi):=\begin{cases}
0&\text{if $\bI_F\subset\ker(\phi)$,}\cr
\frac{\displaystyle a (\phi)}{\displaystyle n}-1 &\text{otherwise,}
\end{cases}\end{equation}
\end{lem}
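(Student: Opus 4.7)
The plan is to combine the structural classification of elliptic Langlands parameters for $\GL_n(F)$ with the formula \eqref{eq:aphi} expressing $a(\phi)$ in terms of $\phi_0$ and $N$. Case 1 disposes of itself: $\Gal(F_s/F)^{l+} \subset \bI_F$ for every $l \geq 0$, so $\bI_F \subset \ker(\phi)$ immediately forces $d(\phi) = 0$.

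For Case 2 the key structural input is that an elliptic parameter for $\GL_n(F)$ corresponds to an \emph{indecomposable} Weil--Deligne representation, which up to isomorphism takes the form
\[
\phi_0 \;=\; \bigoplus_{i=0}^{b-1} \sigma \otimes |\cdot|^{i}, \qquad ab = n,
\]
with $\sigma \colon \bW_F \to \GL_a(\C)$ irreducible and $N$ the standard Steinberg nilpotent. I would then observe that $\bI_F \not\subset \ker(\phi)$ is equivalent to $\sigma|_{\bI_F}$ being nontrivial. Since $\bI_F$ is normal in $\bW_F$ and $U := \C^a$ is $\bW_F$-irreducible, the subspace $U^{\bI_F}$ is a $\bW_F$-subrepresentation, hence equals $0$ or $U$; ramification excludes $U$, so $U^{\bI_F} = 0$. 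As $(\sigma \otimes |\cdot|^i)|_{\bI_F} = \sigma|_{\bI_F}$, this gives $V^{\bI_F} = 0 = V_N^{\bI_F}$, and \eqref{eq:aphi} collapses to $a(\phi) = a(\phi_0) = b \cdot a(\sigma)$, using invariance of the Artin conductor under unramified twists.

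The crux is the identity $a(\sigma) = a(1+d(\sigma))$ for irreducible ramified $\sigma$. I would expand
\[
a(\sigma) = \dim(U/U^{\bI_F}) + \swan(\sigma), \qquad \swan(\sigma) = \int_{0}^{\infty} \dim\bigl(U/U^{\bI_F^u}\bigr)\,du
\]
in upper numbering. The same normality--irreducibility dichotomy applied to each $\bI_F^u$ gives $U^{\bI_F^u} \in \{0, U\}$, so the integrand equals $a$ exactly where $\bI_F^u \not\subset \ker(\sigma)$. Unwinding $d(\sigma) = \inf\{l : \bI_F^{l+} \subset \ker(\sigma)\}$: for $l < d(\sigma)$, pick $l' \in [l, d(\sigma))$ and any $g \in \bI_F^{l'+} \setminus \ker(\sigma)$; since $g \in \bI_F^{l''}$ for all $l'' > l'$ and $\bI_F^{l''} \subset \bI_F^l$ when $l'' \geq l$, one has $g \in \bI_F^l$, so $\bI_F^l \not\subset \ker(\sigma)$. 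Conversely for $l > d(\sigma)$ away from the countable (hence measure-zero) set of jumps, $\bI_F^l = \bI_F^{l+} \subset \ker(\sigma)$. Thus the integrand is $a \cdot \mathbf{1}_{(0,d(\sigma))}$ up to a null set, giving $\swan(\sigma) = a \cdot d(\sigma)$ and $a(\sigma) = a(1 + d(\sigma))$.

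Finally, $d(\phi)$ depends only on $\phi|_{\bI_F} = (\sigma|_{\bI_F})^{\oplus b}$, which has the same kernel on $\bI_F$ as $\sigma$, so $d(\phi) = d(\sigma)$, and
\[
\frac{a(\phi)}{n} - 1 \;=\; \frac{b \cdot a \cdot (1+d(\sigma))}{n} - 1 \;=\; d(\sigma) \;=\; d(\phi).
\]
The main obstacle I foresee is the bookkeeping with the upper-numbering filtration needed to pin down precisely where the integrand jumps and to confirm that the ramification hypothesis on $\phi$ matches ramification of $\sigma$ in the indecomposable--WD decomposition; once those are in hand the remainder is essentially linear algebra on top of the elliptic-parameter classification.
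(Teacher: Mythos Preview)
Your proof is correct and follows essentially the same route as the paper: decompose the elliptic parameter as an irreducible $\bW_F$-representation $\sigma$ tensored with an irreducible $\SL_2(\C)$-representation (your Weil--Deligne indecomposable form is the equivalent picture), use normality of $\bI_F$ to get $V^{\bI_F}=0$ in the ramified case so that $a(\phi)=a(\phi_0)=b\cdot a(\sigma)$, and reduce to the identity $a(\sigma)=a\,(1+d(\sigma))$ for irreducible ramified $\sigma$. The only difference is that the paper imports this last identity from \cite[Lemma~4.4]{ABPS3}, whereas you supply a direct proof via the Swan-conductor integral in upper numbering; your argument is thus more self-contained but otherwise identical in strategy.
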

\begin{proof}
This was proved in \cite[Lemma~4.4]{ABPS3} under the additional
assumption $\SL_2 (\C) \subset \ker \phi$. We will reduce to that
special case.

Since $\phi$ is elliptic, it defines an irreducible $n$-dimensional
representation $V$ of $\mathbf W_F \times \SL_2 (\C)$. Hence there are
irreducible representations $(\phi_1,V_1)$ of $\mathbf W_F$ and
$(\phi_2,V_2)$ of $\SL_2 (\C)$ such that
\begin{equation}\label{eq:tensorV}
(\phi,V) = (\phi_1,V_1) \otimes (\phi_2,V_2).
\end{equation}
In particular $V^{\bI_F} = V_1^{\bI_F} \otimes V_2$. Suppose first that
$V_1^{\bI_F} = V_1$. Then $\bI_F \subset \ker \phi$, so $d(\phi) = 0$ 
by definition. Now suppose $V_1^{\bI_F} \neq V_1$. As $(\phi_1,V_1)$ is
irreducible and $\bI_F$ is normal in $\mathbf W_F$, we must have
$V_1^{\bI_F} = 0$. Hence $V^{\bI_F} = 0$, which by \eqref{eq:phi0} and 
\eqref{eq:aphi} implies $a(\phi) = a(\phi_0)$. By \cite[Corollary VI.2.1']{Ser} 
$a(\phi_0)$ is additive in $V$ and depends only on
\[
\phi_0 \big|_{\bI_F} = \phi \big|_{\bI_F} = 
\phi_1 \big|_{\bI_F} \otimes \mathrm{id}_{V_2} .
\]
Now it follows from \eqref{eq:tensorV} that
\begin{equation}\label{eq:aphi1}
a (\phi) = a(\phi_1) \dim V_2 = n a (\phi_1) / \dim V_1 . 
\end{equation}
As $\ker \phi_1 \supset \SL_2 (\C)$ we may apply \cite[Lemma~4.4]{ABPS3},
which together with \eqref{eq:aphi1} gives
\[
d(\phi_1) = \frac{a(\phi_1)}{\dim V_1} - 1 = \frac{a(\phi)}{n} - 1 . 
\]
To conclude, we note that $d(\phi_1) = d(\phi)$ by \eqref{eq:tensorV}.
\end{proof}

\subsection{The depth of representations of $\GL_m(D)$}
\label{subsec:DepthCond} \

Let $k_D=\mf o_D/\mf p_D$ be the residual field of $D$. 
Let $\mf A$ be a hereditary $\mf o_F$-order in $\Mat_m(D)$. The
Jacobson radical of $\mf A$ will be denoted by $\mf P$. Let $r=e_D(\mf A)$ and 
$e=e_F(\mf A)$ denote the integers defined by $\mf p_D\mf A=\mf P^r$ and 
$\mf p_F\mf A=\mf P^e$, respectively. We have  
\begin{equation} \label{eqn:eFeD}
e_F(\mf A)=d\,e_D(\mf A).
\end{equation}
The normalizer in $G$ of $\mf A^\times$ will be denoted by
\[
\mf K(\mf A):=\left\{g\in G \,:\,g^{-1}\mf A^\times g= \mf
A^\times\right\}.
\]
Define a sequence of compact open subgroups of $G=\GL_m(D)$ by
\[
U^0(\mf A):=\mf A^\times,\quad\text{and}\quad
U^j(\mf A):=1+\mf P^j,\;\;j\ge 1.
\]
Then $\mf A^\times$ is a parahoric subgroup of $G$ and $U^1 (\mf A)$ is
its pro-unipotent radical. We define the \emph{normalized level} of an 
irreducible representation $\pi$ of $G$ to be
\begin{equation} \label{eqn:def_depth}
d(\pi):=\min\left\{j/e_F(\mf A)\right\},
\end{equation}
where $(j,\mf A)$ ranges over all pairs consisting of an integer $j\ge 0$
and a hereditary $\mf o_F$-order $\mf A$ in $\Mat_m(D)$ such that $\pi$ contains
the trivial character of $U^{j+1}(\mf A)$. 

\begin{rem}{\rm
When $\pi$ is a representation of $\GL_n (F)$, our notion of normalized level 
coincides with that of \cite[\S~12.6]{BuHe}. However when $\pi$ is a 
representation of $D^\times$ (or more generally of $\GL_m(D)$), the normalized level
of $\pi$ as defined above 
is not equal to the level $\ell_D(\pi)$ defined 
in \cite[\S~54.1]{BuHe} (resp. $\ell(\pi)$ defined by Broussous in \cite[Th\'eor\`eme~A.1.2]{BaBr}): we have 
\[d(\pi)=\frac{1}{d}\,\ell_D(\pi) \quad(\text{resp. }\;\; 
d(\pi)=\frac{1}{d}\,\ell(\pi).\] 
This reflects the fact that we have divided by $e_F(\mf A)$ instead 
of $e_D(\mf A)$ in Eqn.~\eqref{eqn:def_depth}. }
\end{rem}
 
The following proposition will allow to use both results that were written in the
setting of the normalized level, as general results on the depth in the sense
of Moy and Prasad.

\begin{prop}\label{prop:compareDepth}
The normalized level of $\pi \in \Irr (G)$ equals its Moy--Prasad depth.
\end{prop}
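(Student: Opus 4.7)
The plan is to identify the filtration $\{U^j(\mf A)\}_{j\ge 0}$ of $\mf A^\times$ with the Moy--Prasad filtration $\{P_{x,r}\}_{r\ge 0}$ at a suitable point $x$ of the Bruhat--Tits building $\mc B(G)$, after which the equality of the two notions of depth is essentially tautological.

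First, by the Bruhat--Tits theory for inner forms of $\GL_n$, as carried out in \cite{BaBr} and the references there, every parahoric subgroup of $G=\GL_m(D)$ has the form $\mf A^\times$ for a unique hereditary $\mf o_F$-order $\mf A\subset\Mat_m(D)$, the correspondence being given by the periodic $\mf o_D$-lattice chain associated with $\mf A$. This pins down a facet in $\mc B(G)$, and I take $x$ to be its barycenter, so that $P_x=\mf A^\times$.

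The key identification is then
\[
P_{x,r}\;=\;1+\mf P^{\lceil r\,e_F(\mf A)\rceil}\qquad(r>0),
\]
which in particular yields $P_{x,\,j/e_F(\mf A)}=U^j(\mf A)$ for every $j\ge 1$ and shows that the jumps of the filtration at $x$ occur precisely at $r\in\tfrac{1}{e_F(\mf A)}\Z_{\ge 1}$. For the split case $D=F$ this is the original description of Moy and Prasad; in the general case it follows by unramified descent along a maximal unramified subfield of $D$ of degree $d$, using the compatibility of the Moy--Prasad filtration with base change to $F^{\mr{nr}}$. The normalization $e_F(\mf A)=d\,e_D(\mf A)$ recorded in \eqref{eqn:eFeD} is precisely what is needed in order to measure valuations against a uniformizer of $F$ (rather than of $D$).

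Granted this, the comparison is bookkeeping. For $r\in[j/e_F(\mf A),\,(j+1)/e_F(\mf A))$ with $j\ge 0$ one reads off $P_{x,r+}=U^{j+1}(\mf A)$, so
\[
d_{\mathrm{MP}}(\pi)\;=\;\inf\bigl\{\,r\ge 0:\pi^{P_{x,r+}}\neq 0\ \text{for some }x\,\bigr\}\;=\;\inf\bigl\{\,\tfrac{j}{e_F(\mf A)}:\pi^{U^{j+1}(\mf A)}\neq 0\,\bigr\},
\]
the right-hand infimum running over all admissible pairs $(j,\mf A)$ as in \eqref{eqn:def_depth}; this is by definition the normalized level $d(\pi)$. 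The main obstacle is the explicit description $P_{x,r}=1+\mf P^{\lceil r\,e_F(\mf A)\rceil}$ in the non-split case: once this compatibility between Moy--Prasad and the $(1+\mf P^{\bullet})$-filtration has been established, the depth comparison is purely formal.
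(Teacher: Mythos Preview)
Your identification $U^j(\mf A)=P_{x_{\mf A},\,j/e_F(\mf A)}$ at the barycenter $x_{\mf A}$ of the facet attached to $\mf A$ is correct and is exactly what the paper uses (citing \cite[Proposition~4.2 and Appendix~A]{BrLe} rather than arguing by unramified descent). But the final step is not ``purely formal'' and contains a genuine gap. In your displayed chain
\[
d_{\mathrm{MP}}(\pi)=\inf\bigl\{r\ge 0:\pi^{P_{x,r+}}\neq 0\text{ for some }x\bigr\}
=\inf\bigl\{\tfrac{j}{e_F(\mf A)}:\pi^{U^{j+1}(\mf A)}\neq 0\bigr\},
\]
the left infimum ranges over \emph{all} points $x\in\mc B(G)$, while the right-hand side only sees barycenters of facets. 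Your identification of filtrations therefore only yields $d_{\mathrm{MP}}(\pi)\le d(\pi)$. For the reverse inequality you must show that the Moy--Prasad depth is actually attained at a barycenter, and this is not automatic: for $r>0$ the groups $P_{x,r}$ genuinely depend on the position of $x$ within its facet, so a priori a non-barycentric point could witness invariants at a strictly smaller $r$.

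The paper isolates exactly this issue and closes it by invoking \cite[Remark~6.1]{MoPr1}: in a root system of type $A_{m-1}$ the barycenters of facets are ``optimal points'' in the sense of Moy--Prasad, so the depth is always realized at one of them. This is a genuine (if short) input specific to type $A$, and it is the one non-formal ingredient missing from your argument.
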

\begin{proof}
Let us denote the Moy--Prasad depth of $(\pi,V_\pi)$ by $d_{\MP}(\pi)$ for the duration
of this proof. For any point $x$ of the Bruhat--Tits building $\mc B (G)$ of $G$, 
consider the Moy--Prasad filtrations $P_{x,r}, P_{x,r+} \; (r \in \R_{\geq 0})$ of the 
parahoric subgroup $P_x \subset G$ \cite[\S 2]{MoPr1}. We normalize
these filtrations by using the valuation on $F_s$ which maps $F^\times$ onto $\Z$.
Then $d_{\MP}(\pi)$ is the minimal $r \in \R_{\geq 0}$ such that $V_\pi^{P_{x,r+}} \neq 0$
for some $x \in \mc B (G)$, see \cite[\S 3.4]{MoPr2}.

Any hereditary $\mf o_F$-order $\mf A$ in $\Mat_m (D)$ is associated to a unique facet
$\mc F( \mf A)$ of $\mc B (G)$. The filtration $\{ U^j (\mf A) \mid j \in \Z_{\geq 0} \}$
was compared with the Moy--Prasad groups for $x \in \mc F (\mf A)$ by Broussous and
Lemaire. Let $x_{\mf A}$ be the barycenter of $\mc F (\mf A)$. From  
\cite[Proposition 4.2 and Appendix A]{BrLe} and the definition of $e_F (\mf A)$ we see that
\[
U^j (\mf A) = P_{x_{\mf A},j / e_F (\mf A)} \text{ for all } j \in \Z_{\geq 0} .
\]
Hence the definitions of the normalized level and the Moy--Prasad depth are almost 
equivalent, the only difference being that for $d_{\MP}(\pi)$ we must consider all points
of $\mc B (G)$, whereas for $d(\pi)$ we may only use barycenters of facets of $\mc B (G)$.
Thus it remains to check the following claim:
there exists a facet $\mc F$ of $\mc B (G)$ with barycenter $x_{\mc F}$, such that $V_\pi$
has nonzero $P_{x_{\mc F},d_{\MP}(\pi)+}$-invariant vectors.

This is easy to see with the explicit constructions of the groups $P_{x,r}$ at hand, but
we prefer not to delve into those details here. In fact, since every chamber of $\mc B (G)$
intersects every $G$-orbit in $\mc B (G)$, it suffices to consider facets contained in the 
closure of a fixed "standard" chamber. Then the claim becomes equivalent to saying that 
$x_{\mc F}$ is an "optimal point" in the sense of \cite[\S 6.1]{MoPr1}. That is assured by
\cite[Remark 6.1]{MoPr1}, which is applicable because the root system of $G$ is 
of type $A_{m-1}$. 
\end{proof}

\subsection{Conductors of representations of $\GL_m (D)$} \

Let $\vareps(s,\pi,\psi)$ denote the Godement--Jacquet local constant \cite{GoJa}.
It takes the form
\begin{equation} \label{eqn:epsGJ}
\vareps(s,\pi,\psi) =
\vareps(0,\pi,\psi) \, q^{-f(\pi,\psi)s}, \quad\text{where } \vareps(0,\pi,\psi)
\in \C^\times. 
\end{equation}
Recall that $c(\psi)$ is the largest integer $c$ such that 
$\mf p_F^{-c}\subset\ker\psi$. In the previous section we had $c(\psi)=-1$.

A representation of $D^\times$ is called \emph{unramified} if it is trivial on 
$\mf o_D^\times$. An unramified representation of $D^\times$ is a character 
and has depth zero.

\begin{prop} \label{prop:BF}
Let $\pi$ be a supercuspidal irreducible representation of $G$. 
We have
\begin{equation} \label{eqn:BF}
f(\pi,\psi)\,=\,\begin{cases}
n\left(c(\psi)+1\right)-1&\text{if $m=1$ and $\pi$ is unramified,}\cr
n\,\left(d(\pi)+1+c(\psi)\right)&\text{otherwise.}\end{cases}
\end{equation}
\end{prop}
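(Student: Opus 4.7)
The plan is to combine the Jacquet--Langlands transfer of $\epsilon$-factors with Broussous's conductor--level formula for supercuspidal representations of $\GL_m(D)$, after reducing to a single normalization of the additive character.

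First, I would reduce to one fixed $\psi$. The standard transformation law $\epsilon(s,\pi,\psi_a) = \omega_\pi(a)\,|a|_F^{n(s-1/2)}\,\epsilon(s,\pi,\psi)$ (with $\psi_a(x) = \psi(ax)$ and $\omega_\pi$ the central character, valid for Godement--Jacquet $\epsilon$-factors) gives $f(\pi,\psi_a) = f(\pi,\psi) + n\,v_F(a)$. Since $c(\psi_a) = c(\psi) + v_F(a)$, both sides of \eqref{eqn:BF} shift by $n\,v_F(a)$ under $\psi\mapsto \psi_a$. It therefore suffices to verify the identity for a single $\psi_0$ with $c(\psi_0) = -1$; the targets become $f(\pi,\psi_0) = -1$ in the special case and $f(\pi,\psi_0) = n\,d(\pi)$ otherwise.

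For the special case ($m=1$ and $\pi$ unramified), I would write $\pi = \chi \circ \mathrm{Nrd}$ with $\chi$ an unramified character of $F^\times$. Properties (b) and (f) of Theorem~\ref{thm:JL} identify $\JL^{-1}(\pi)$ with $\St_{\GL_n(F)} \otimes \chi\circ \det$, and property (i) transports $\epsilon$-factors to $\GL_m(D)$. Equation~\eqref{eq:epsilonSt} then yields $\epsilon(s,\pi,\psi_0) = (-1)^{n-1} q^{s-1/2}\chi(\omega_F^{-1})$, so that $f(\pi,\psi_0) = -1$, as required. For the remaining (generic) case I would invoke Broussous's conductor formula \cite[Th\'eor\`eme~A.1.2]{BaBr}, which computes $f(\pi,\psi_0)$ in terms of his level $\ell(\pi)$ for every supercuspidal $\pi$ of $\GL_m(D)$ that is not an unramified character of $D^\times$. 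Using the normalization $d(\pi)=\ell(\pi)/d$ recorded in the remark preceding the proposition, together with $n=dm$, Broussous's formula rewrites as $f(\pi,\psi_0) = n\,d(\pi)$.

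The main obstacle is matching normalization conventions: Broussous's level differs from our depth by the ramification factor $d$ (compare equation~\eqref{eqn:eFeD} and the preceding remark), and the Godement--Jacquet definition of $\epsilon$-factors must be reconciled with the one used in \cite{BaBr}. Once those bookkeeping issues are handled, the two cases glue together to give \eqref{eqn:BF}.
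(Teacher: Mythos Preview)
Your plan follows essentially the same route as the paper: treat the unramified $D^\times$ case via Theorem~\ref{thm:JL} and \eqref{eq:epsilonSt}, and handle the remaining supercuspidals by a conductor--level formula traced back to Bushnell--Fr\"ohlich and Broussous. Your reduction to a single $\psi$ via the transformation law for $\epsilon$-factors is a clean replacement for the paper's direct appeal to \cite{BuFrLN} in the unramified case.

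One caution on the citation you lean on. In the paper's reading, \cite[Th\'eor\`eme~A.1.2]{BaBr} is not itself a conductor formula; it shows that all fundamental strata contained in $\pi$ share the same normalized level, so that $\ell(\pi)$ is well-defined. The conductor identity comes instead from Bushnell--Fr\"ohlich \cite[Theorem~3.3.8]{BuFr}, whose hypotheses are verified via \cite[Th\'eor\`eme~A.2.1]{BaBr} and $L(s,\pi)=1$. That result expresses $q^{f(\pi,\psi)}$ as the index $[\mf A:\mf p_F^{c(\psi)+1}\mf P^j]^{1/n}$ for a stratum $[\mf A,j,\dots]$ occurring in $\pi$, and an explicit computation then gives $f(\pi,\psi)=n(j/e_F(\mf A)+1+c(\psi))$. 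A separate step --- using the existence of a maximal simple type \cite{SeSt4}, minimality of fundamental strata \cite{Bro}, and the level-uniqueness of \cite[Th\'eor\`eme~A.1.2]{BaBr} --- identifies $j/e_F(\mf A)$ with $d(\pi)$. So the ``bookkeeping'' you flag is more than a normalization match: it is really the chain \cite{BaBr}~(A.2.1) $\Rightarrow$ \cite{BuFr} $\Rightarrow$ index computation $\Rightarrow$ identification with depth. With that chain filled in, your argument and the paper's coincide.
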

\begin{proof}
We suppose first that $m=1$ (so $d=n$) and $\pi$ is unramified. The required 
formula can be read off from \eqref{eq:epsilonSt} if $c(\psi) = -1$. For 
general $\psi$, applying \cite[Theorem~3.2.11]{BuFrLN} and taking in account 
\cite[(1.2.7), (1.2.8), (1.2.10)]{BuFrLN}, we obtain
\[
f(\pi,\psi)=\left(d(1-d-dc(\psi)\right)\,\cdot\, (-\frac{1}{d})=d+dc(\psi)-1.
\] 
Hence the first case of Eqn.~\eqref{eqn:BF} holds.

From now on, we will assume that $m\ge 2$ or $\pi$ is ramified. Then by combining
\cite[Th\'eor\`eme~A.2.1]{BaBr} with the 
fact that the Godement--Jacquet L-function $L(s,\pi)$ is $1$, we see that $\pi$ satisfies the conditions of Theorem 3.3.8 of
\cite{BuFr}.  Recall that $n=md$. 
By applying the formula of \cite[Theorem~3.3.8~(iv)]{BuFr},
we obtain
\[q^{f(\pi,\psi)}=\left[\mf A:\mf p_F^{c(\psi)+1}\mf P^{j}\right]^{1/n}.\]
On the other hand, the $\mf o_F$-order is $G$-conjugate to
the standard principal $\mf o_F$-order of $\Mat_{m}(D)$ defined by
the partition $(t,\ldots,t)$ ($r$-times) of $m$, where $m=rt$ and
$r=e_D(\mf A)$. Hence we have $\mf A/\mf P\simeq
\left(\Mat_t(k_D)\right)^r$. It follows that
\[
\left[ \mf A : \mf P \right] = (q^d)^{t^2r} = q^{drt^2}.
\]
Hence we get 
\[
f(\pi,\psi)=\frac{drt^2(j+e+ec(\psi))}{n}={n\left(\frac{j}{e}+1+c(\psi)\right)},
\]
since $drt^2=nt=n^2/e$. 

On the other hand it follows from \cite[Corollaire~5.22]{SeSt4} that there exists a maximal simple
type $(J,\lambda)$ in $G$, and an extension $\Lambda$ of $\lambda$ to the
normalizer $\bar J=\Nor_G(\lambda)$ of $\lambda$, such that 
\[
\pi=\cInd_{\bar J}^G\Lambda.
\]
By the construction of the type $(J,\lambda)$, we have
$d(\pi)\le j/e$. Conversely, let $[\mf A',j',j'-1,\beta']$ be a stratum
contained in $\pi$.
Then if $[\mf A',j',j'-1,\beta']$ is such that its
normalized level $j'/e'$ is minimal among the normalized levels of all the strata
contained in $\pi$, it is necessarily fundamental
\cite[Theorem~1.2.1.~(ii)]{Bro}. 
Since all the fundamental strata contained in $\pi$ have the same normalized level \cite[Th\'eor\`eme~A.1.2]{BaBr}, we get
 $j/e=d(\pi)$.  
\end{proof}

Theorem \ref{thm:depth_cond} below proves the validity of Conjecture 4.3
of \cite{LaRa}. 
In the case when $F$ has characteristic $0$, it is due to Lansky and Raghuram
for the groups $\GL_n(F)$ and $D^\times$, \cite[Theorem~3.1]{LaRa}, and for
certain representations of $\GL_2(D)$, \cite[Theorem~4.1]{LaRa}.  
Our proof is inspired by those of these results.
 
\begin{thm} \label{thm:depth_cond}
The depth $d(\pi)$ and the conductor $f(\pi):=f(\pi,\psi)-nc(\psi)$ of
each essentially square-integrable irreducible representation $\pi$ of 
$\GL_m(D)$ are linked by the following relation: 
\begin{equation} \label{eqn:dc}
d(\pi)=\begin{cases}
0&\text{if $\pi$ is an unramified twist of $\St_{\GL_m (D)}$}\cr
\frac{\displaystyle f(\pi)-n}{\displaystyle n}&\text{otherwise.}
\end{cases},\end{equation}
In particular
\begin{equation} \label{eqn:dcII}
d(\pi)=\max\left\{\frac{f(\pi)-n}{n},0\right\}.\end{equation} 
\end{thm}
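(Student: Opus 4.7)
The plan is to reduce everything to the supercuspidal case already handled in Proposition~\ref{prop:BF}, by treating separately the three natural families of essentially square-integrable representations: unramified twists of Steinberg, supercuspidals, and generalised Steinberg representations built from a supercuspidal of a smaller inner form.

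First I would dispose of the Steinberg case. If $\pi = \St_{\GL_m(D)} \otimes \chi \circ \Nrd$ with $\chi$ unramified, then the Steinberg representation is known to have depth zero and unramified twisting preserves depth, so $d(\pi)=0$. Equation~\eqref{eq:epsilonSt} gives $f(\pi,\psi) = -1$ whenever $c(\psi)=-1$, hence $f(\pi) = n-1$, so $(f(\pi)-n)/n = -1/n < 0$ and \eqref{eqn:dcII} is compatible with \eqref{eqn:dc}. Next, for $\pi$ supercuspidal that is not an unramified character of $D^\times$, Proposition~\ref{prop:BF} gives $f(\pi,\psi) = n(d(\pi)+1+c(\psi))$, so $f(\pi) = n(d(\pi)+1)$ and \eqref{eqn:dc} is immediate.

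The main work is the third case, where $\pi$ is essentially square-integrable but not supercuspidal. By the classification (as recalled around equation~\eqref{eq:indrep} in the proof of Theorem~\ref{thm:JL}), there exist integers $ab=m$ with $b>1$, an integer $s_\sigma \mid ad$, and a supercuspidal $\sigma$ of $\GL_a(D)$ such that $\pi$ is the unique essentially square-integrable subquotient of the induced representation
\[
\Pi' \;=\; I_{\GL_a(D)^b}^{\GL_m(D)}\Bigl(\nu_D^{s_\sigma\frac{1-b}{2}}\sigma \otimes \cdots \otimes \nu_D^{s_\sigma\frac{b-1}{2}}\sigma\Bigr).
\]
Moreover we are in Case~2 of that proof, so $L(s,\pi) = L(s,\pi^\vee) = 1$, hence $\epsilon(s,\pi,\psi) = \gamma(s,\pi,\psi)$. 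Using the multiplicativity of $\gamma$-factors along parabolic induction together with the fact that $\gamma$-factors are constant on the composition series of an induced representation on $\GL$, I would obtain
\[
\gamma(s,\pi,\psi) \;=\; \prod_{k=1}^{b} \gamma\bigl(s + s_\sigma(k-(1+b)/2),\sigma,\psi\bigr).
\]
Applying the supercuspidal case to $\sigma$ (noting that $\sigma$ is of ramified/non-character type in Case~2, so Proposition~\ref{prop:BF} applies), reading off the exponent of $q^{-s}$ via \eqref{eqn:epsGJ}, and subtracting $nc(\psi)$, yields $f(\pi) = b\,f(\sigma)$. Combined with $d(\sigma) = (f(\sigma)-ad)/(ad)$ from Step~2 and $n=abd$, this becomes $(f(\pi)-n)/n = (f(\sigma)-ad)/(ad) = d(\sigma)$.

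The last and principal obstacle is then to identify this with $d(\pi)$, i.e.\ to prove $d(\pi) = d(\sigma)$. I would exploit the Sécherre–Stevens construction of $\pi$: the maximal simple type $(J,\lambda)$ attached to $\sigma$ extends to a cover $(J_P,\lambda_P)$ realising $\pi$ as a sub/quotient of $\cInd \Lambda$ for a suitable extension, and the normalised level of this cover equals that of $(J,\lambda)$ because both are built from the same underlying stratum $[\mf A',j',j'-1,\beta']$. Invoking the fundamental-stratum uniqueness result \cite[Th\'eor\`eme~A.1.2]{BaBr} used in the proof of Proposition~\ref{prop:BF} then gives $d(\pi) = j'/e' = d(\sigma)$ via Proposition~\ref{prop:compareDepth}. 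This type-theoretic input is where the argument is most delicate; an alternative route would be to use that the Aubert--Zelevinsky involution preserves depth, reducing the claim to the (easy) Langlands quotient of $\Pi'$, which visibly has depth $d(\sigma)$. Either way, combining this equality with Step~3 gives \eqref{eqn:dc} in the generalised Steinberg case, and \eqref{eqn:dcII} follows by comparing signs in each of the three cases.
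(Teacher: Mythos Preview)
Your overall strategy is sound and reaches the right conclusion, but the two proofs distribute the work quite differently.

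For the conductor relation $f(\pi) = b\,f(\sigma)$, you argue directly on $\GL_m(D)$ via multiplicativity of Godement--Jacquet $\gamma$-factors and their constancy across the composition series of $\Pi'$. The paper instead transports the question to $\GL_n(F)$ via the Jacquet--Langlands correspondence (Theorem~\ref{thm:JL}(i)), applies Henniart's explicit formula \eqref{eqn:epsJLpi} for the $\epsilon$-factor of a generalised Steinberg on the split side, and reads off $f(\pi) = b s_\sigma f(\sigma^*)$ and $f(\sigma) = s_\sigma f(\sigma^*)$, whence $f(\pi) = b f(\sigma)$. Your route is more economical and avoids invoking $\JL$ at this point; the paper's route has the virtue that the required $\epsilon$-factor identity on $\GL_n(F)$ is available in the literature in exactly the needed form, whereas your ``$\gamma$-factors are constant on the composition series'' for $\GL_m(D)$, though true, deserves a precise reference (the statements from \cite{Jac} the paper later invokes are formulated for Langlands quotients, and your $\pi$ is not the Langlands quotient of $\Pi'$).

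For the equality $d(\pi) = d(\sigma)$, on the other hand, you are working much too hard. The paper dispatches this in one line: since $\pi$ is an irreducible subquotient of a representation parabolically induced from unramified twists of $\sigma$, Moy--Prasad's theorem \cite[Theorem~5.2]{MoPr2} that parabolic induction preserves depth gives $d(\pi) = d(\sigma)$ immediately. Neither the S\'echerre--Stevens cover construction nor the Aubert--Zelevinsky involution is needed; the type-theoretic machinery you propose is already subsumed in the general Moy--Prasad result, which is precisely the tool the paper reuses later in the proof of Theorem~\ref{thm:LLCdepthcusp}.
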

\begin{proof}
Let $\pi \in \Irr_{\ess L^2}(\GL_m (D))$. We use the same notation as for $\pi'$ 
in the proof of Theorem \ref{thm:JL}.h, so $\pi$ is consituent of 
\[
I_{\GL_{a} (D)^b}^{\GL_m (D)} \Big( 
\nu_D^{s_\sigma{\frac{(1-b)}{2}}} \sigma \otimes 
\nu_D^{s_\sigma{\frac{(3-b)}{2}}} \sigma \otimes \cdots \otimes 
\nu_D^{s_\sigma{\frac{(b-1)}{2}}} \sigma \Big) ,
\] 
where $\sigma \in \Irr (\GL_a (D))$ is supercuspidal.
Since the depth is preserved by parabolic induction
\cite[Theorem 5.2]{MoPr2}, we get
\[
d(\pi) = d \Big( \nu_D^{s_\sigma{\frac{(1-b)}{2}}} \sigma \otimes 
\nu_D^{s_\sigma{\frac{(3-b)}{2}}} \sigma \otimes \cdots \otimes 
\nu_D^{s_\sigma{\frac{(b-1)}{2}}} \sigma \Big) .
\]
It follows that
\begin{equation} \label{dpisigma}
d(\pi) = d(\sigma).
\end{equation}
We will apply Proposition~\ref{prop:BF} to the supercuspidal representation 
$\sigma$ of $\GL_a(D)$. In the special case $\sigma$ is an unramified
representation of $D^\times$ (hence $a=1$ in this case), 
Eqn. \eqref{eqn:BF} gives 
\[f(\sigma,\psi)=d\left(c(\psi)+1\right)-1,\] 
that is, $f(\sigma)=d-1$. Hence we get
\[ \frac{f(\sigma)-d}{d} = -\frac{1}{d}.
\]
Then it implies that
\[\max\left\{\frac{f(\sigma)-d}{d},0\right\}=
\max\left\{-\frac{1}{d},0\right\}=0=d(\sigma),\]
in other words, Eqn.~\eqref{eqn:dcII} 
holds for the unramified representations of $D^\times$.

In the other cases (that is, $a\ne 1$ or $\sigma$ is ramified),
\eqref{eqn:BF} gives $f(\sigma)=ad(d(\sigma)+1)$, that is,
\begin{equation} \label{eqn:df}
\frac{f(\sigma)}{ad}=d(\sigma)+1.
\end{equation}
Since $d(\sigma)\ge 0$ (by definition of the depth), we obtain that
\begin{equation} \label{dsigma}
d(\sigma)=\max\left\{\frac{f(\sigma)-ad}{ad},0\right\}.
\end{equation}
Hence \eqref{eqn:dc} holds for every supercuspidal irreducible representation 
of $\GL_a(D)$, with $a\ge 1$ an arbitrary integer.

Recall that $s_\sigma$ is an integer
dividing $ad$, say $ad=a^*s_\sigma$ with $a^* \in \Z$. The 
image $\JL^{-1}(\sigma)$ of $\sigma$ under the  
Jacquet-Langlands correspondence is equivalent to the
Langlands quotient of the parabolically induced representation
\[I_{\GL_{a^*} (F)^{s_\sigma}}^{\GL_{a^*s_\sigma} (F)} \big( 
\nu_F^{\frac{(1-s_\sigma)}{2}} \sigma^* \otimes 
\nu_F^{\frac{(3-s_\sigma)}{2}} \sigma^* \otimes \cdots \otimes 
\nu_F^{\frac{(s_\sigma-1)}{2}} \sigma^* \big) ,
\]
where $\sigma^*$ is a unitary supercuspidal irreducible representation of
$\GL_{a^*}(F)$ and $\nu_F (g^*) = |\det (g^*)|_F$.

The representation $\JL^{-1} (\pi)$ is equivalent to a constituent of the 
parabolically induced representation
\[I_{\GL_{a^*} (F)^{b s_\sigma}}^{\GL_{adb} (F)} \big( 
\nu_F^{\frac{(1-b s_\sigma)}{2}} \sigma^* \otimes 
\nu_F^{\frac{(3-b s_\sigma)}{2}} \sigma^* \otimes \cdots \otimes 
\nu_F^{\frac{(b s_\sigma -1)}{2}} \sigma^* \big).
\]
We recall from \cite[\S~2.6]{Hen0} the formula describing the epsilon factor 
of $\JL^{-1} (\pi)$ in terms of the local factors of $\sigma^*$:
\begin{equation} \label{eqn:epsJLpi}
\epsilon(s,\JL^{-1} (\pi),\psi)=\prod_{i=0}^{b s_\sigma-1}
\epsilon(s+i,\sigma^*,\psi)\,\prod_{j=0}^{b s_\sigma-2}
\frac{L(-s-j,\check{\sigma^*})}{L(s+j,\sigma^*)}.
\end{equation}
Since the Jacquet--Langlands correspondence preserves 
the $\epsilon$-factors (see Theorem \ref{thm:JL}~i) we have 
\[
\epsilon(s,\JL^{-1}(\pi),\psi)=\epsilon(s,\pi,\psi).
\] 
Thus we have obtained the following formula
\begin{equation} \label{eqn:epspi}
\epsilon(s,\pi,\psi)=\prod_{i=0}^{b s_\sigma-1}
\epsilon(s+i,\sigma^*,\psi)\,\prod_{j=0}^{b s_\sigma-2}
\frac{L(-s-j,\check{\sigma^*})}{L(s+j,\sigma^*)}.
\end{equation}
If $\pi=\St_{\GL_m (D)} \otimes \chi$ for some unramified character 
$\chi$ of $D^\times$, it follows from \eqref{eq:epsilonSt} that  
$f(\pi,\psi)=-1$ in the case where $c(\psi)=-1$,
hence we obtain
\begin{equation} \label{eqn:fSt}
f(\pi) = n-1.
\end{equation}
From now on we assume $\pi$ is not equivalent to a representation of the form 
$\St_{\GL_m (D)} \otimes \chi$, with $\chi$ an unramified character of 
$D^\times$ (that is, we have $m\ne 1$ or $\sigma$ ramified). Then 
Theorem \ref{thm:JL}~b and~f implies that similarly $\JL^{-1}(\pi)$ is not a 
twist of $\St_{\GL_n (F)}$ by an unramified character of $F^\times$. Thus we 
have $a^*\ne 1$ or $\sigma^*$ ramified. It follows that 
$L(-s-j,\check{\sigma^*})=L(s+j,\sigma^*)=1$, and we obtain from (\ref{eqn:epspi}) that
\begin{equation} \label{eqn:fpi}
f(\pi)=b s_\sigma\,f(\sigma^*).
\end{equation}
In the special case when $b=1$ the equation (\ref{eqn:fpi}) gives
\begin{equation} \label{eqn:fsigma}
f(\sigma)= s_\sigma\,f(\sigma^*).
\end{equation}
Then using (\ref{dpisigma}) and (\ref{dsigma}) we get
\begin{equation} \label{eqn:dpi}
d(\pi)=d(\sigma)=\max\left\{\frac{b s_\sigma f(\sigma^*)-bad}{bad},0\right\}
=\max\left\{\frac{f(\pi)-n}{n},0\right\}. \qedhere
\end{equation}
\end{proof}

\subsection{Depth preservation} \

\begin{cor}
The Jacquet--Langlands correspondence preserves the depth of 
essentially square-integrable representations of $\GL_m (D)$. 
\end{cor}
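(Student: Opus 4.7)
The plan is to combine the conductor–depth identity of Theorem~\ref{thm:depth_cond} with the fact that the Jacquet–Langlands correspondence preserves $\epsilon$-factors (property (i) of Theorem~\ref{thm:JL}), so that the depth on both sides can be read off the same invariant $f(\pi)$.

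First I would unpack $f(\pi) = f(\pi,\psi) - n\, c(\psi)$ and observe that, by Theorem~\ref{thm:JL}~(i), for any $\pi \in \Irr_{\ess L^2}(\GL_n(F))$ and any nontrivial character $\psi$ of $F$ one has $\epsilon(s,\JL(\pi),\psi) = \epsilon(s,\pi,\psi)$. Looking at this equality via the form \eqref{eqn:epsGJ}, the exponent of $q^{-s}$ on both sides must coincide, giving $f(\JL(\pi),\psi) = f(\pi,\psi)$, hence $f(\JL(\pi)) = f(\pi)$. Note also that $n = dm$ is an invariant of the inner form class, so the quantity $(f(\pi)-n)/n$ is the same whether computed for $\pi$ or $\JL(\pi)$.

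Second, I would check that the dichotomy in Theorem~\ref{thm:depth_cond} is respected by $\JL$. The only pairs $(\pi,\JL(\pi))$ for which the two branches of \eqref{eqn:dc} could disagree are those where exactly one of $\pi,\JL(\pi)$ is an unramified twist of a Steinberg representation. But Theorem~\ref{thm:JL}~(f) gives $\JL(\St_{\GL_n(F)}) = \St_{\GL_m(D)}$, and property (b) shows that $\JL$ commutes with twists by characters of $F^\times$, which on the $\GL_m(D)$ side are precisely the characters $\chi \circ \Nrd$. Since the unramified characters of $\GL_m(D)$ are exactly those of the form $\chi \circ \Nrd$ with $\chi$ an unramified character of $F^\times$ (they factor through the reduced norm to $F^\times$), $\JL$ sends unramified twists of $\St_{\GL_n(F)}$ bijectively to unramified twists of $\St_{\GL_m(D)}$. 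Consequently the two cases of \eqref{eqn:dc} match up under $\JL$.

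Putting these two observations together: in the Steinberg-twist case both depths are $0$, while in the remaining case both depths equal $(f(\pi)-n)/n = (f(\JL(\pi))-n)/n$. Either way $d(\pi) = d(\JL(\pi))$, which is the claim. There is no substantive obstacle here; the content of the corollary has already been absorbed into the conductor computation of Theorem~\ref{thm:depth_cond} and the $\epsilon$-factor identity of Theorem~\ref{thm:JL}~(i), so the proof is essentially a bookkeeping step.
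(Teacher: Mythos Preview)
Your proposal is correct and follows the same approach as the paper: use Theorem~\ref{thm:JL}(i) to see that $\JL$ preserves conductors, then invoke Theorem~\ref{thm:depth_cond} to convert this into preservation of depth. The paper's proof is terser because it implicitly appeals to the unified formula \eqref{eqn:dcII}, $d(\pi)=\max\{(f(\pi)-n)/n,\,0\}$, which makes your second step (matching up the two branches of the dichotomy via properties (b) and (f)) unnecessary---though what you wrote there is also correct.
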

\begin{proof}
Theorem \ref{thm:JL}.i shows in particular that the Jacquet--Langlands 
correspondence preserves conductors. Now Theorem \ref{thm:depth_cond} shows 
that it preserves depths as well.
\end{proof}

Theorems \ref{thm:depth_cond} and \ref{thm:JL} are also the crucial steps to
show that the local Langlands correspondence for inner forms of $\GL_m (D)$ preserves
depths. With similar considerations we show that it also preserves L-functions, 
$\epsilon$-factors and $\gamma$-factors. We abbreviate these three to "local factors".
For the basic properties of the local factors of Langlands parameters we refer to 
\cite{Tat}.

\begin{thm} \label{thm:LLCdepthcusp}
The local Langlands correspondence for representations of $\GL_m(D)$
preserves L-functions, $\epsilon$-factors, $\gamma$-factors and depths.
In other words, for every irreducible smooth representation $\pi$ of $\GL_m(D)$:
\[
\begin{array}{lll}
L (s,\pi) & = & L (s,\rec_{D,m}(\pi)), \\
\epsilon (s,\pi,\psi) & = & \epsilon (s,\rec_{D,m}(\pi),\psi), \\
\gamma (s,\pi,\psi) & = & \gamma (s,\rec_{D,m}(\pi),\psi) \\
d(\pi) & = & d (\rec_{D,m}(\pi)).
\end{array}
\]
\end{thm}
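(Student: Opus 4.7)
The plan is to reduce everything to the essentially square-integrable case and then invoke the known properties of $\JL$, the classical LLC for $\GL_n(F)$, and the depth/conductor dictionaries already established in the paper.

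First I would handle $\pi \in \Irr_{\ess L^2}(\GL_m(D))$. By the definition of $\rec_{D,m}$ recorded in \eqref{eq:1.1}, one has $\rec_{D,m}(\pi) = \rec_{F,n}(\JL^{-1}(\pi))$. The classical LLC $\rec_{F,n}$ for $\GL_n(F)$ of \cite{LRS, HaTa, Hen2} preserves L-functions, $\epsilon$-factors, and $\gamma$-factors, and Theorem~\ref{thm:JL}(g)--(i) shows that $\JL$ preserves all three as well. So by composition, $\rec_{D,m}$ already preserves these three local factors on essentially square-integrable representations.

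For the depth equality on such $\pi$, set $\phi = \rec_{D,m}(\pi)$, which is automatically elliptic. Comparing \eqref{eqn:epsGJ} with \eqref{eq:epsilonPhi}, the $\epsilon$-factor identity just established forces $f(\pi) = a(\phi)$. Plugging this into Theorem~\ref{thm:depth_cond} and Lemma~\ref{lem:phi} gives the same numerical value $(f(\pi)-n)/n = (a(\phi)-n)/n$ on the two sides, provided we are not in the degenerate case $\pi \cong \St_{\GL_m(D)} \otimes (\chi \circ \mathrm{Nrd})$ for an unramified character $\chi$. In that case one checks directly that both $d(\pi) = 0$ and $d(\phi) = 0$, the latter since $\bI_F \subset \ker \phi$.

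For a general irreducible $\pi$, the Langlands classification for $\GL_m(D)$ exhibits $\pi$ as the unique irreducible Langlands quotient of a standard module parabolically induced from essentially square-integrable representations $\delta_1 \otimes \cdots \otimes \delta_r$ on a block Levi $M = \prod_i \GL_{m_i}(D)$; the construction of $\rec_{D,m}$ in \cite{HiSa, ABPS1} is made so that $\rec_{D,m}(\pi) = \bigoplus_i \rec_{D,m_i}(\delta_i)$. On the representation side, L-, $\epsilon$- and $\gamma$-factors multiply (\cite[Proposition~2.3]{Jac}) and depth is computed as $\max_i d(\delta_i)$ (\cite[Theorem~5.2]{MoPr2}, together with the fact that the Langlands quotient has the same depth as the standard module); on the Galois side the same direct-sum/max behaviour is built in. Therefore the general case reduces to the essentially square-integrable case already handled.

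The main obstacle, and the only non-formal piece, will be the depth step for essentially square-integrable $\pi$: extracting $f(\pi) = a(\phi)$ cleanly from the $\epsilon$-factor identity while keeping track of the $c(\psi)$-dependence on the two sides, and then matching the dichotomies in Theorem~\ref{thm:depth_cond} and Lemma~\ref{lem:phi}, in particular the unramified Steinberg-twist branch.
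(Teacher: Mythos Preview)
Your proposal is correct and follows essentially the same route as the paper: reduce to the essentially square-integrable case via the Langlands classification, use that $\rec_{D,m} = \rec_{F,n} \circ \JL^{-1}$ together with Theorem~\ref{thm:JL}(g)--(i) to get preservation of local factors, then read off $f(\pi)=a(\phi)$ and combine Theorem~\ref{thm:depth_cond} with Lemma~\ref{lem:phi} for depth. The only adjustment is that for the general case you should cite \cite[Theorem~3.4]{Jac} (rather than Proposition~2.3) for the fact that the Langlands quotient inherits the local factors of the inducing data, as the paper does.
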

\begin{proof}
It is well-known that the local Langlands correspondence for $\GL_n (F)$
preserves local factors, see the introduction of \cite{HaTa}.

Assume first that $\pi$ is essentially square-integrable. Recall the notations of 
the $\epsilon$ factors of $\pi$ and of $\phi := \rec_{D,m}(\pi) \in\Phi(\GL_m(D))$ 
from \eqref{eq:epsilonPhi} and \eqref{eqn:epsGJ}. By definition
\[
\rec_{D,m}(\pi) = \rec_{F,n}(\JL^{-1} (\pi)) ,
\]
so by Theorem \ref{thm:JL} $\rec_{D,m}$ preserves the $\epsilon$-factors of $\pi$:
\begin{equation} \label{eqn:epsA}
\vareps(0,\phi,\psi) \, q^{-s (a(\phi) + n c(\psi))} = \vareps(s,\phi,\psi) = 
\vareps(s,\pi,\psi) = \vareps (0,\pi,\psi) q^{-s f(\pi,\psi)} .
\end{equation}
Hence, with the notation from Theorem \ref{thm:depth_cond}:
\begin{equation} \label{eqn:a=f}
f(\pi) = f (\pi,\psi) - n c(\psi) = a(\phi) .
\end{equation}
The properties of $\rec_{F,n}$ imply that $\phi$ is elliptic. By combining 
Lemma \ref{lem:phi} with Theorem \ref{thm:depth_cond} and \eqref{eqn:a=f}, 
we obtain that $d(\phi) = d(\pi)$ whenever $\pi$ is essentially square-integrable.

Now let $\pi$ be any irreducible representation of $\GL_m (D)$. By the
Langlands classification, there exist a parabolic subgroup $P \subset \GL_m (D)$ 
with Levi factor $M$ and an irreducible essentially square-integrable 
representation $\omega$ of $M$, such that $\pi$ is a quotient of 
$I_P^{\GL_m (D)} (\omega)$. Moy and Prasad proved in \cite[Theorem 5.2]{MoPr2}
that $\pi$ and $\omega$ have the same depth. By \cite[Theorem 3.4]{Jac} $\pi$
and $\omega$ have the same L-functions and $\epsilon$-factors and by 
\cite[(2.3) and (2.7.3)]{Jac} they also have the same $\gamma$-factors.

On the other hand, $M$ is isomorphic to a product of groups of the form
$\GL_{m_i}(D)$, so the local Langlands correspondence for $M$ is simply the
product of that for the $\GL_{m_i}(D)$. The Langlands parameters $\rec_{D,m}(\pi)$ 
and $\rec_M (\omega)$ are related via an inclusion of the complex dual groups 
$\prod_i \GL_{d m_i}(\C) \to \GL_n (\C)$. Hence these two Langlands parameters 
also have the same depth and local factors.

Because we already proved that the LLC preserves depths for essentially
square-integrable representations of $\GL_m (D)$ or $M$, we can conclude that
\[
d (\pi) = d (\omega) = d (\rec_M (\omega)) = d (\rec_{D,m}(\pi)) ,
\]
and similarly for the local factors.
\end{proof}

\section{The local Langlands correspondence for inner forms of $\SL_n(F)$} 
\label{sec:LLCSL}
\subsection{The statement of the correspondence} \

Recall that $F$ is a non-archimedean local field and that the equivalence
classes of inner forms of $\SL_n (F)$ are in bijection with the isomorphism
classes of central simple $F$-algebras of dimension $n^2$, via
$\Mat_m (D) \mapsto \GL_m (D)_{\der}$. As mentioned after Theorem \ref{thm:1.1},
every character of $Z( \SL_n (\C))$ gives rise to such an algebra via the
Kottwitz isomorphism.

The local Langlands correspondence for
$\GL_m (D)_{\der}$ is implied by that for $\GL_m (D)$, in
the following way. A Langlands parameter 
\[
\phi : \mathbf W_F \times \SL_2 (\C) \to \PGL_n (\C)
\]
which is relevant for $\GL_m (D)_{\der}$ can be lifted it to a Langlands parameter
\[
\overline{\phi} : \mathbf W_F \times \SL_2 (\C) \to \GL_n (\C) 
\]
which is relevant for $\GL_m (D)$, by \cite{Wei}. 
Then $\rec_{m,D}^{-1}(\overline{\phi})$ is an irreducible representation of 
$\GL_m (D)$ which, upon restriction to $\GL_m (D)_{\der}$, decomposes as a finite
direct sum of irreducible representations. The packet $\Pi_\phi (\GL_m (D)_{\der})$
is defined as the set of irreducible constituents of 
$\mathrm{Res}_{\GL_m (D)_\der}^{\GL_m (D)} \rec_{m,D}^{-1}(\overline{\phi})$. 

For these groups it is more interesting to consider the enhanced Langlands 
correspondence, where $\phi$ is supplemented with an irreducible 
representation of a finite group. In addition to the groups defined in
\eqref{eq:1.2}, we write
\begin{equation}\label{eq:1.2bis}
\mc Z_\phi = Z (\SL_n (\C)) / Z (\SL_n (\C)) \cap C(\phi)^\circ \cong
Z (\SL_n (\C)) C(\phi)^\circ / C(\phi)^\circ .
\end{equation}
Any character of $\mc Z_\phi$ determines a character of $Z (\SL_n (\C))$, and 
hence an inner form of $\SL_n (F)$.
An enhanced Langlands parameter is a pair $(\phi,\rho)$ with 
$\rho \in \Irr (\mc S_\phi)$. The groups in \eqref{eq:1.2}, \eqref{eq:1.2bis} 
are related to the more usual component group
\[
S_\phi := Z_{\PGL_n (\C)}(\text{im } \phi) / Z_{\PGL_n (\C)}(\text{im }
\phi)^\circ
\]
by the short exact sequence
\[
1 \to \mc Z_\phi \to \mc S_\phi \to S_\phi \to 1.
\]
Hence $\mc S_\phi$ has more irreducible representations than $S_\phi$. Via
the enhanced Langlands correspondence the additional ones are associated to 
irreducible representations of non-split inner forms of $\SL_n (F)$. The following
result is due to Hiraga and Saito \cite[Theorem 12.7]{HiSa} for generic 
representations of $\GL_m (D)$ when char $F = 0$.

\begin{thm}\label{thm:1.2} \cite[Theorem~1.2]{ABPS3} \\
There exists a bijective correspondence between:
\begin{itemize}
\item pairs $(\GL_m (D)_{\der},\pi)$ with $\pi \in \Irr (\GL_m (D)_{\der})$ and 
$\GL_m (D)_{\der}$ an inner form of $\SL_n (F)$, considered up to equivalence;
\item $\SL_n (\C)$-conjugacy classes of pairs $(\phi,\rho)$ with 
$\phi \in \Phi (\SL_n (F))$ and $\rho \in \Irr (\mc S_\phi)$.
\end{itemize}
Here the group $\GL_m (D)_{\der}$ determines $\rho \big|_{\mc Z_\phi}$ and conversely.
The correspondence satisfies the desired properties from \cite[\S 10.3]{Bor}, with
respect to restriction from inner forms of $\GL_n (F)$, temperedness and essential 
square-integrability of representations.
\end{thm}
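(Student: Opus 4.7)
The plan is to descend the enhanced LLC of Theorem~\ref{thm:1.1} to the derived subgroup $G_{\der} \subset G = \GL_m(D)$ via restriction of representations, combined with Clifford theory and the Hiraga--Saito framework for parametrizing the resulting L-packets.

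First I fix a relevant Langlands parameter $\phi : \mathbf W_F \times \SL_2(\C) \to \PGL_n(\C)$ and choose a lift $\overline\phi : \mathbf W_F \times \SL_2(\C) \to \GL_n(\C)$, whose existence is guaranteed by \cite{Wei}; any two lifts differ by a character of $\mathbf W_F$, hence by a character of $F^\times$ under local class field theory. Applying Theorem~\ref{thm:1.1} to $\overline\phi$ yields an irreducible representation $\pi = \rec_{D,m}^{-1}(\overline\phi)$ of the inner form $G = \GL_m(D)$ prescribed by the chosen character of $Z(\SL_n(\C)) \subset C(\overline\phi)$. I then declare $\Pi_\phi(G_{\der})$ to be the set of irreducible constituents of $\pi|_{G_{\der}}$; this is independent of the choice of lift, since by Theorem~\ref{thm:JL}.b twisting $\overline\phi$ by a character replaces $\pi$ by $\pi \otimes (\chi \circ \Nrd)$, whose restriction to $G_{\der}$ is unchanged.

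Next I would parametrize $\Pi_\phi(G_{\der})$ by the fibre of the restriction map $\Irr(\mc S_\phi) \to \Irr(\mc Z_\phi)$ above the Kottwitz character $\zeta_G$ attached to $G$. The quotient $G/G_{\der}$ embeds in $F^\times$ via $\Nrd$, so it is abelian and Clifford theory applies: $\pi|_{G_{\der}}$ is a multiplicity-free sum of irreducibles indexed by the Pontryagin dual of the stabilizer $X(\pi) := \{\chi \in \Irr(G/G_{\der}) \mid \pi \otimes (\chi \circ \Nrd) \cong \pi\}$. Via $\rec_{D,m}$ and Theorem~\ref{thm:JL}.b, $X(\pi)$ corresponds to the group of characters $\chi$ of $\mathbf W_F$ valued in $Z(\GL_n(\C))$ with $\overline\phi \otimes \chi \sim \overline\phi$ in $\GL_n(\C)$. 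A centralizer computation identifies this group canonically with the Pontryagin dual of the fibre of $\Irr(\mc S_\phi) \to \Irr(\mc Z_\phi)$ above $\zeta_G$, producing the required bijection; the canonical identification follows the Hiraga--Saito method \cite{HiSa}.

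Finally, the compatibility properties are inherited directly: temperedness and essential square-integrability are preserved under restriction $G \to G_{\der}$ and correspond to the analogous properties of $\overline\phi$ by Theorems~\ref{thm:1.1} and~\ref{thm:JL}, while compatibility with restriction from inner forms of $\GL_n(F)$ is built into the construction. I expect the main obstacle to be extending the Hiraga--Saito parametrization, which in \cite{HiSa} is established only for generic representations in characteristic zero, to arbitrary irreducible representations of arbitrary inner forms $\GL_m(D)$ over arbitrary non-archimedean local fields; this requires careful bookkeeping of the character-twist actions on both the representation-theoretic and Galois-theoretic sides (matching the $X(\pi)$-action on constituents of $\pi|_{G_{\der}}$ with the $\mc S_\phi$-action on parameters), and constitutes the technical core of \cite{ABPS3}.
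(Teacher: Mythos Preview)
The paper does not give a proof of this theorem: it is quoted from \cite{ABPS3}, with only the construction of the L-packets (lift $\phi$ to $\overline\phi$, apply $\rec_{D,m}^{-1}$, restrict to $G_{\der}$) sketched in the paragraphs preceding the statement. Your proposal reproduces exactly this construction and correctly identifies that the remaining work---the Clifford-theoretic parametrization of the packet by $\Irr(\mc S_\phi)$ in the Hiraga--Saito style, extended beyond the generic, characteristic-zero case---is carried out in \cite{ABPS3} rather than here, so there is nothing further in the present paper to compare against.
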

We remark that the above bijection need not be canonical if 
$\Pi_\phi (\GL_m (D)_{\der})$ has more than one element.

\subsection{The depth of representations of $\GL_m (D)_\der$} \

For the depth of an irreducible representation of $\GL_m (D)_{\der}$ there are two
candidates. Besides the Moy--Prasad depth one can define the normalized level,
just as in \eqref{eqn:def_depth}. This was done for representations of $\SL_n (F)$
in \cite{BuKu2}. However, Proposition \ref{prop:compareDepth} quickly reveals that 
these two notions agree:

\begin{prop}\label{prop:3.1}
The Moy--Prasad depth of an irreducible representation of \\
$\GL_m (D)_{\der}$ equals its normalized level. 
\end{prop}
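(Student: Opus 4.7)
The plan is to rerun the argument of Proposition \ref{prop:compareDepth} for $G_\der = \GL_m(D)_\der$ in place of $G = \GL_m(D)$, exploiting the transparent relation between the Bruhat--Tits buildings and the Moy--Prasad filtrations of the two groups.

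First I would record the compatibility
\[
P_{x,r}(G_\der) = P_{x,r}(G) \cap G_\der
\]
for every $x$ in the common (reduced) Bruhat--Tits building and every $r \geq 0$. This follows from the construction of the Moy--Prasad groups in terms of affine root subgroups, since $G_\der$ and $G$ share the same root groups and differ only in the split part of the central torus. Consequently, defining the normalized level for $\pi \in \Irr(G_\der)$ by replacing $U^{j+1}(\mf A)$ with $U^{j+1}(\mf A) \cap G_\der$ in \eqref{eqn:def_depth}, the Broussous--Lemaire identity $U^j(\mf A) = P_{x_\mf A,\, j/e_F(\mf A)}(G)$ gives
\[
U^j(\mf A) \cap G_\der = P_{x_\mf A,\, j/e_F(\mf A)}(G_\der).
\]
Thus the two candidate definitions of depth for $\pi$ differ only in that the normalized level minimizes over barycenters of facets, while the Moy--Prasad depth minimizes over all points of $\mc B(G_\der)$.

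Second, to see that these two minima agree, I would invoke the same optimal-point argument used at the end of the proof of Proposition \ref{prop:compareDepth}. Since the root system of $G_\der$ is again of type $A_{m-1}$, \cite[Remark 6.1]{MoPr1} applies and the barycenter of every facet is an optimal point in the sense of \cite[\S 6.1]{MoPr1}, so the argument transfers verbatim.

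The main obstacle is the compatibility claim in the first step: while it is standard and well-known for the split pair $(\GL_n, \SL_n)$, one must verify it uniformly for all inner forms $\GL_m(D) \supset \GL_m(D)_\der$. This reduces to examining the affine root group description of $\GL_m(D)$ from \cite{BrLe} and checking that the root-group filtrations are insensitive to whether one works inside $G$ or $G_\der$; only the torus part of the filtration feels the difference, and that part is disposed of by intersecting with $G_\der$.
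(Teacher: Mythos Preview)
Your proposal is correct and matches the paper's own proof essentially verbatim: the paper simply observes that $\GL_m(D)$ and $\GL_m(D)_\der$ share the same building, defines $P'_{x,r} = P_{x,r} \cap \GL_m(D)_\der$ and $U^j(\mf A)' = U^j(\mf A) \cap \GL_m(D)_\der$, and declares that with these groups the proof of Proposition~\ref{prop:compareDepth} carries over. Your version is a touch more explicit in flagging the compatibility $P_{x,r}(G_\der) = P_{x,r}(G) \cap G_\der$ as the one point requiring care, but the architecture is identical.
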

\begin{proof}
Let us compare the descriptions of the two kinds of depth with those given in the 
proof of Proposition \ref{prop:compareDepth}. By definition $\GL_m (D)$ and 
$\GL_m (D)_{\der}$ have the same Bruhat--Tits building. The Moy--Prasad depth is 
defined in terms of the groups 
\begin{equation}
P'_{x,r} = P_{x,r} \cap \GL_m (D)_{der} \quad \text{with} \quad x \in \mc B (GL_m (D)).
\end{equation} 
The normalized level is expressed with the groups 
\[
U^j (\mf A)' = U^j (\mf A) \cap \GL_m (D)_{der},
\] 
where $\mf A$ is a hereditary $\mf o_F$-order in $M_m (D)$. With these groups instead 
of $P_{x,r}$ and $U^j (\mf A)$ the entire proof of Proposition 
\ref{prop:compareDepth} carries over to $\GL_m (D)_{der}$.
\end{proof}

It turns out that the depth of an irreducible $\GL_m (D)_\der$-representation $\pi$ 
behaves nicely with respect to restriction from $\GL_m (D)$. To be precise, equals the 
minimum of the 
depths of the irreducible $\GL_m (D)$-representations that contain $\pi$. (Notice that
this minimum is always attained because all depths for inner forms of $\GL_n (F)$ lie
in $\frac{1}{n} \Z$.)

\begin{prop}\label{prop:3.2}
Let $\pi \in \Irr (\GL_m (D)_\der)$ and let $\overline{\pi} \in
\Irr (\GL_m (D))$ be such that 
\begin{itemize}
\item $\pi$ is a direct summand of $\mathrm{Res}_{\GL_m (D)_\der}^{\GL_m (D)} 
(\overline \pi)$;
\item $d(\overline{\pi}) \leq d (\overline{\pi} \otimes \chi \circ \Nrd)$ 
for every character $\chi$ of $F^\times$.
\end{itemize}
Then $d (\pi) =  d(\overline \pi)$. 
\end{prop}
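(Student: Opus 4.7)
The plan is to prove the two inequalities $d(\pi) \leq d(\overline{\pi})$ and $d(\overline{\pi}) \leq d(\pi)$ separately, working with the normalized level via Proposition~\ref{prop:3.1}. Here depth is detected by invariants under the groups $U^{j+1}(\mf A)$ on the $\GL_m(D)$ side and $U^{j+1}(\mf A)' := U^{j+1}(\mf A) \cap \GL_m(D)_{\der}$ on the $\GL_m(D)_{\der}$ side.

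The easy direction $d(\pi) \leq d(\overline\pi)$ is a projection argument: a nonzero $U^{j+1}(\mf A)$-fixed vector in $V_{\overline\pi}$ is in particular $U^{j+1}(\mf A)'$-fixed, and its nonzero projection to some irreducible $\GL_m(D)_{\der}$-summand $\pi_i$ of $\mathrm{Res}(\overline\pi)$ witnesses $d(\pi_i) \leq j/e_F(\mf A)$. Clifford theory makes $\pi$ a $\GL_m(D)$-conjugate of $\pi_i$, and conjugation by $g \in \GL_m(D)$ sends the pair $(\mf A, j)$ to $(g\mf A g^{-1}, j)$ with the same $e_F$-value, so the same bound passes to $\pi$.

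The substantive direction is $d(\overline\pi) \leq d(\pi)$. Start from a $U^{j+1}(\mf A)'$-fixed vector $v \in V_\pi \subset V_{\overline\pi}$ realizing $d(\pi) = j/e_F(\mf A)$. Because $\GL_m(D)_{\der}$ is normal in $\GL_m(D)$, $U^{j+1}(\mf A)'$ is normal in $U^{j+1}(\mf A)$, and therefore $V_{\overline\pi}^{U^{j+1}(\mf A)'}$ is $U^{j+1}(\mf A)$-stable. The span $W$ of the $U^{j+1}(\mf A)$-orbit of $v$ sits inside this subspace and is finite-dimensional by smoothness of $\overline\pi$ combined with compactness of $U^{j+1}(\mf A)$. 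The key observation is that $U^{j+1}(\mf A)/U^{j+1}(\mf A)'$ embeds via the reduced norm into $F^\times$, hence is abelian, so $W$ decomposes as a direct sum of $U^{j+1}(\mf A)$-characters, and I can pick a nonzero $w \in W$ on which $U^{j+1}(\mf A)$ acts by a character $\chi$ trivial on $U^{j+1}(\mf A)'$.

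This $\chi$ factors through $\Nrd$ as a character $\chi_0$ of the closed subgroup $\Nrd(U^{j+1}(\mf A)) \subset F^\times$, and by Pontryagin duality $\chi_0$ extends to a character $\tilde\chi$ of $F^\times$. Then $w$ is fixed by $U^{j+1}(\mf A)$ in $\overline\pi \otimes (\tilde\chi^{-1} \circ \Nrd)$, giving $d(\overline\pi \otimes \tilde\chi^{-1} \circ \Nrd) \leq j/e_F(\mf A) = d(\pi)$. Applying the hypothesis to the character $\tilde\chi^{-1}$ of $F^\times$ yields $d(\overline\pi) \leq d(\pi)$. The main obstacle is precisely this abelian-quotient-then-twist step: a $U^{j+1}(\mf A)'$-fixed vector in $\overline\pi$ need not be $U^{j+1}(\mf A)$-fixed, but the obstruction is always given by a character of $F^\times$ pulled back via $\Nrd$, which is exactly the family of twists that the hypothesis is designed to neutralize.
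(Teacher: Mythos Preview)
Your proof is correct and follows essentially the same approach as the paper's: the paper works with the Moy--Prasad groups $P_{x,r}$ and $P'_{x,r}$ (and argues the second inequality by contradiction), while you work with the equivalent filtration groups $U^{j+1}(\mf A)$ and $U^{j+1}(\mf A)'$ via Proposition~\ref{prop:3.1}, but the key step in both is identical --- the quotient $U^{j+1}(\mf A)/U^{j+1}(\mf A)' \cong \Nrd(U^{j+1}(\mf A)) \subset F^\times$ is abelian, so one extracts a character eigenvector and twists it away using the minimality hypothesis on $d(\overline\pi)$.
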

\begin{proof}
In the case $G = \GL_n (F)$, this is guaranteed by Proposition \ref{prop:3.1} and
\cite[Proposition 1.7.iii]{BuKu2}. The same proof works for $\GL_m (D)$ but this would 
be cumbersome, one would have to check that everything in \cite[p.265--268]{BuKu2} 
also works with a division algebra instead of a field. 

Instead, we select some parts of \cite[\S 1]{BuKu2} to provide a shorter proof.
Pick a $x \in \mc B (G)$ such that $(\overline \pi, V)$ has nonzero vector fixed
by $P_{x,d(\overline \pi)+}$. Then
\[
V^{P'_{x,d(\overline \pi)+}} \supset V^{P_{x,d(\overline \pi)+}} \neq 0 ,
\]
so there is an irreducible $\GL_m (D)_{\der}$-subrepresentation $(\pi_1,V_1)$ of 
$\overline \pi$ with 
\[
V_1^{P'_{x,d(\overline \pi)+}} \neq 0 \quad \text{and} \quad 
d(\pi_1) \leq d (\overline \pi).
\]
Since $\overline \pi$ is irreducible, $\pi_1$ is isomorphic to a 
$\GL_m (D)$-conjugate of $\pi$. Conjugation by $g \in \GL_m (D)$ sends any 
Moy--Prasad group $P_{y,r}$ to $P_{g(y),r}$. So this operation preserves depths and
\begin{equation}\label{eq:3.1}
d(\pi) = d(\pi_1) \leq d (\overline \pi) .
\end{equation}
Suppose now that $d(\pi) < d(\overline \pi)$. Take a nonzero $v \in
V^{P'_{x,d(\pi)+}}$ and consider 
\[
V_v := \text{span} \{ \overline \pi (g) v \mid g \in P_{x,d(\pi)+} \} .
\]
As $P'_{x,d(\pi)+}$ is normal in $P_{x,d(\pi)+}$, it acts trivially on $V_v$, and
$V_v$ can be regarded as representation of
\[
P_{x,d(\pi)+} / P'_{x,d(\pi)+} \cong \text{Nrd}(P_{x,d(\pi)+}) \subset F^\times .
\]
Hence there is a character $\chi$ of $F^\times$ such that $\chi^{-1} \circ \text{Nrd}$ 
appears in the action of $P_{x,d(\pi)+}$ on $V_v$. Then the irreducible 
$\GL_m (D)$-representation $\overline \pi \otimes \chi \circ \text{Nrd}$ has a nonzero
vector fixed by $P_{x,d(\pi)+}$, so
\[
d(\overline \pi \otimes \chi \circ \text{Nrd}) \leq d(\pi) < d (\overline \pi) .
\]
This contradicts the assumptions of proposition, so \eqref{eq:3.1} must be an equality.
\end{proof}

\subsection{The depth of Langlands parameters for $\GL_m (D)_\der$} \

The depth of a Langlands parameter $\phi : \bW_F \times \SL_2 (\C) \to
\PGL_n (\C)$ for an inner form of $\SL_n (F)$
is defined as in Section \ref{sec:depth}:
\[
d(\phi) = \inf \{ l \in \R_{\geq 0} \mid \Gal (F_s / F)^{l+} \subset \ker \phi \} . 
\]
The following result may be considered as the non-archimedean analogue of 
\cite[Theorem~1]{ChKa} in the case of the geometric local Langlands correspondence.

\begin{cor}\label{cor:3.3}
Let $\pi \in \Irr (\GL_m (D)_\der)$ with Langlands parameter 
$\phi \in \Phi (\SL_n (F))$. Then $d(\pi) \geq d (\phi)$.
\end{cor}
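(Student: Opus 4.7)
The plan is to reduce Corollary~\ref{cor:3.3} to the depth-preservation result for inner forms of $\GL_n(F)$ (Theorem~\ref{thm:LLCdepthcusp}) via a well-chosen lift to $\GL_m(D)$.

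First I would fix a lift $\overline{\phi} : \bW_F \times \SL_2(\C) \to \GL_n(\C)$ of $\phi$ whose existence is guaranteed by \cite{Wei}. By the very definition of the L-packet $\Pi_\phi(\GL_m(D)_{\der})$, every irreducible $\GL_m(D)$-representation $\overline{\pi}$ whose restriction to $\GL_m(D)_{\der}$ contains $\pi$ arises as $\rec_{D,m}^{-1}(\overline{\phi})$ for some lift $\overline{\phi}$ of $\phi$. Since two lifts of $\phi$ differ by a one-dimensional representation of $\bW_F$, and Artin reciprocity together with the LLC for $\GL_m(D)$ identifies this with twisting by $\chi \circ \Nrd$ for a character $\chi$ of $F^\times$, the set of candidates for $\overline{\pi}$ is precisely an orbit under $\chi \mapsto \overline{\pi} \otimes \chi \circ \Nrd$.

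Next I would apply Proposition~\ref{prop:3.2}: among all such $\overline{\pi}$, choose one minimizing the depth in its twist-orbit. The minimum is attained because depths of irreducible representations of $\GL_m(D)$ lie in the discrete set $\tfrac{1}{n}\Z_{\geq 0}$ and are bounded above (by any fixed $d(\overline{\pi}_0)$) and below (by $0$). For this choice of $\overline{\pi}$, Proposition~\ref{prop:3.2} yields $d(\pi) = d(\overline{\pi})$, and Theorem~\ref{thm:LLCdepthcusp} then gives $d(\overline{\pi}) = d(\overline{\phi})$, so $d(\pi) = d(\overline{\phi})$.

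Finally, I would compare $d(\overline{\phi})$ with $d(\phi)$. Since $\phi$ is the composition of $\overline{\phi}$ with the quotient map $p : \GL_n(\C) \to \PGL_n(\C)$, one has $\ker \overline{\phi} \subset \ker \phi$. Consequently any upper ramification subgroup $\Gal(F_s/F)^{l+}$ that lies in $\ker \overline{\phi}$ also lies in $\ker \phi$, whence $d(\phi) \leq d(\overline{\phi})$. Combining,
\[
d(\pi) = d(\overline{\pi}) = d(\overline{\phi}) \geq d(\phi).
\]
The only potentially delicate point is verifying that every $\overline{\pi}$ containing $\pi$ really does correspond to a lift of $\phi$ (rather than to an unrelated parameter), but this is built into the construction of the LLC for $\GL_m(D)_{\der}$ recalled just before Theorem~\ref{thm:1.2}. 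Equality $d(\pi) = d(\phi)$ can fail precisely because the minimizing lift $\overline{\phi}$ may still have strictly larger depth than $\phi$; controlling that gap is exactly the issue that the essentially tame hypothesis is designed to address in the sequel.
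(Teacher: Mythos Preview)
Your proof is correct and follows essentially the same approach as the paper: choose $\overline{\pi}$ of minimal depth in its twist-orbit via Proposition~\ref{prop:3.2}, apply Theorem~\ref{thm:LLCdepthcusp} to get $d(\pi)=d(\overline{\pi})=d(\overline{\phi})$, and then use $\ker\overline{\phi}\subset\ker\phi$ to conclude $d(\overline{\phi})\geq d(\phi)$. Your version is somewhat more explicit about why the candidates for $\overline{\pi}$ form a single twist-orbit, but the logical structure is identical.
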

\begin{proof}
Let $\overline{\pi}$ be as in Proposition \ref{prop:3.2}, so $d(\overline \pi)
= d (\pi)$. Put $\overline \phi = \rec_{D,m}(\overline \pi)$, this is a lift of 
$\phi$ to $\GL_n (\C)$ and Theorem \ref{thm:LLCdepthcusp} says that
$d(\overline \phi) = d (\overline \pi)$. 

We remark that by the compatibility of the LLC with character twists 
\begin{equation}\label{eq:3.2}
d (\overline \phi) \leq d(\overline \phi \otimes \gamma) \text{ for every
character } \gamma \text{ of } \mathbf W_F .
\end{equation}
For any lift $\overline{\phi} \in \Phi (\GL_n (F))$ of $\phi$ we have
$\ker \overline{\phi} \subset \ker \phi$, so $d(\overline{\phi}) \geq d(\phi)$.
\end{proof}

It is possible that the inequality in Corollary \ref{cor:3.3} is strict.
The following example was pointed out to the authors by Mark Reeder.
\begin{ex}
\textup{Take $F = \Q_2$ and a Langlands parameter $\phi : \bW_{\Q_2} \to 
\PGL_2 (\C)$ which is trivial on $\SL_2 (\C)$ and has
image isomorphic to the symmetric group $S_4$. (Such a L-parameter exists,
see for example \cite{Wei}.) We claim that $d(\phi) = 1/3$.\\
Let Ad denote the adjoint representation of $\PGL_2 (\C)$ on 
$\mf{sl}_2 (\C) = \text{Lie}(\PGL_2 (\C))$. Then Ad$\circ 
\phi$ is an irreducible 3-dimensional representation of $\bW_{\Q_2}$. Since
$\PGL_2 (\C)$ is the adjoint group of $\mf{sl}_2 (\C)$, Ad$\circ \phi$ has
the same kernel and hence the same depth as $\phi$. One can check
that Ad$(\phi (\mathbf I_F)) \cong A_4$ and that the image of the wild
inertia subgroup $\mathbf P_F$ is isomorphic to the Klein four group. With 
the formula \cite[(1)]{GrRe} for the Artin conductor we find that
$a(\text{Ad} \circ \phi) = 4$. By Lemma \ref{lem:phi} (with $n=3$) 
$d(\text{Ad} \circ \phi) = 1/3$.\\
Let $\overline \phi : \bW_{\Q_2} \to \GL_2 (\C)$ be a lift of $\phi$. This 
is an irreducible 2-dimensional representation. We claim that 
$d(\overline \phi) \geq 1/2$.\\
With a suitable basis transformation we can achieve that
\[
\phi (\mathbf P_{\Q_2}) = \{ \matje{1}{0}{0}{1}, \matje{0}{i}{i}{0},
\matje{-i}{0}{0}{i}, \matje{0}{1}{-1}{0} \} \subset \PGL_2 (\C) .
\]
Let $1,w_2,w_3,w_4 \in \mathbf P_{\Q_2}$ be preimages of these elements.
Irrespective of the choice of the lift of $\phi$ we have
\[
[\overline{\phi}(w_3),\overline{\phi}(w_4)] = [\matje{-i}{0}{0}{i}, 
\matje{0}{1}{-1}{0}] = \matje{-1}{0}{0}{-1} \in \GL_2 (\C) .
\]
Put $E = F_s^{\ker \overline{\phi}}$ and endow Gal$(E/\Q_2) \cong 
\overline{\phi}(\bW_{\Q_2})$ with the lower numbered filtration. The image of
$\mathbf P_{\Q_2}$ is Gal$(E/\Q_2)_1$ and $[p_3,p_4] \in 
[\text{Gal}(E/\Q_2)_1, \text{Gal}(E/\Q_2)_1]$. By \cite[Propostion IV.2.10]{Ser}
$[p_3,p_4] \in \text{Gal}(E/\Q_2)_3$, so $\overline{\phi}$ is nontrivial on this
ramification group. If we lift $\phi$ with as little 
ramification as possible, $\overline{\phi}(\bW_{\Q_2})$ is an index 2 central
extension of $S_4$. Writing $d_j = |\overline{\phi}(\text{Gal}(E/\Q_2)_j)|$, we have 
\[
d_0 = 24, d_1 = 8, d_2 = d_3 = 2 \text{ and } d_j = 1 \text{ for } j > 3.
\]
The formula \cite[(1)]{GrRe} gives
\[
a(\overline{\phi}) = \frac{\dim (\overline \phi)}{d_0} \sum_{j \geq 0 : d_j > 1} d_j
= \frac{2}{24} (24 + 8 + 2 + 2) = 3 .
\]
Now Lemma \ref{lem:phi} says that $d(\overline \phi) = 1/2$.
}
\end{ex}

To show that Corollary \ref{cor:3.3} is in many cases an equality,
we will make use of several well-known properties of the Artin
reciprocity map $\mathbf a_F : \bW_F \to F^\times$. In particular:

\begin{thm}\label{thm:ramArtin}
$\mathbf a_F (\mathrm{Gal}(F_s / F)^l ) = U_F^{\lceil l \rceil}$ 
for all $l \in \R_{\geq 0}$. 
\end{thm}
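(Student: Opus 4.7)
The plan is to factor the Artin reciprocity map $\mathbf{a}_F$ through the maximal abelian quotient of $\bW_F$, where the upper-numbering ramification filtration can be compared directly with the unit filtration of $F^\times$ via the main theorem of local class field theory together with the Hasse--Arf theorem.

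First, I would recall that $\mathbf{a}_F$ factors through the projection $\Gal(F_s/F) \twoheadrightarrow \Gal(F^{\ab}/F)$, where $F^{\ab}$ denotes the maximal abelian extension of $F$ inside $F_s$. Since the upper numbering is functorial under quotients (Herbrand, see \cite[Chap.~IV, \S3, Prop.~14]{Ser}), the image of $\Gal(F_s/F)^l$ under this projection is precisely $\Gal(F^{\ab}/F)^l$. It therefore suffices to identify $\mathbf{a}_F(\Gal(F^{\ab}/F)^l)$ with $U_F^{\lceil l \rceil}$.

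Next, I would apply the Hasse--Arf theorem \cite[Chap.~V, \S7]{Ser}: in any finite abelian extension of $F$, every jump of the upper-numbering filtration occurs at a non-negative integer. Passing to the projective limit over the finite abelian subextensions of $F^{\ab}/F$, the same conclusion carries over to the filtration $\{\Gal(F^{\ab}/F)^l\}_{l \geq 0}$. Combined with the convention of \cite[Chap.~IV, \S3]{Ser} for extending the upper numbering to real arguments, this forces $\Gal(F^{\ab}/F)^l = \Gal(F^{\ab}/F)^{\lceil l \rceil}$ for every $l \in \R_{\geq 0}$.

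Finally, I would invoke the fundamental theorem of local class field theory: for every integer $n \geq 0$, the Artin map restricts to an isomorphism of $\Gal(F^{\ab}/F)^n$ onto $U_F^n$, where $U_F^0 = \mf{o}_F^\times$ and $U_F^n = 1 + \mf{p}_F^n$ for $n \geq 1$; see, for example, \cite[Chap.~XV, \S2, Cor.~3]{Ser}. Stringing the three steps together produces the claimed equality. The main subtlety, which will require careful bookkeeping rather than new ideas, is verifying that the chosen convention for extending the upper numbering to non-integer arguments genuinely produces $\lceil l \rceil$ on the right-hand side (and not $\lfloor l \rfloor$ or some neighbouring integer); once this is matched up, the statement follows from the three classical inputs above.
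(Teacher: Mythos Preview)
Your proposal is correct and follows essentially the same route as the paper: factor $\mathbf a_F$ through $\Gal(F^{\ab}/F)$, invoke \cite[Chap.~XV, \S2, Cor.~3]{Ser} to match the upper-numbering filtration with the unit filtration, and pass to the projective limit over finite abelian subextensions. The only cosmetic difference is that you isolate the Hasse--Arf step explicitly to explain the ceiling $\lceil l\rceil$, whereas the paper absorbs this into the Serre citation (which is already stated for real $l$ with $U_F^{\lceil l\rceil}$ on the unit side).
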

\begin{proof}
For any finite abelian extension $E / F$, 
\cite[Corollary 3 to Theorem XV.2.1]{Ser} says that the Artin reciprocity
map gives an isomorphism
\begin{equation}\label{eq:3.7}
\mathbf a_F \colon \text{Gal}(E/F)^l \to U_F^{\lceil l\rceil} / 
\big( \rN_{E/F}(E^\times) \cap U_F^{\lceil l\rceil} \big) .
\end{equation}
Let $F_s
^{\ab}$ be the maximal abelian extension of $F$ contained in $F_s$. 
Taking the projective limit of \eqref{eq:3.7} over all finite, Galois 
subextensions of $F_s^{\ab} / F$, we obtain an isomorphism
\[
\mathbf a_F : \text{Gal}(F_s^{\ab} / F)^l \to  U_F^{\lceil l \rceil} . 
\]
We note that Gal$(F_s^{\ab} / F)$ is the quotient of Gal$(F_s / F)$ modulo
the closure of its commutator subgroup. Hence $\mathbf a_F : \bW_F \to F^\times$
factors via Gal$(F_s^{\ab}/F)$.
\end{proof}

Recall from \cite{BuHe1} that a Langlands parameter for $\GL_n (F)$ is 
\emph{essentially tame} if its restriction to the wild inertia subgroup
$\mathbf P_F$ of $\bW_F$ is a direct sum of characters. 
Clearly $\overline{\phi}$ is essentially tame if and only if $\overline{\phi}
(\mathbf P_F)$ lies in a maximal torus of $\GL_n (\C)$, which in turn is
equivalent to $\phi (\mathbf P_F)$ lying in a maximal torus of $\PGL_n (\C)$.

\begin{defn}
A Langlands parameter $\phi$ for an inner form of $\SL_n (F)$ is essentially
tame if $\phi (\mathbf P_F)$ lies in a maximal torus of $\PGL_n (\C)$. 
\end{defn}

By \cite[Corollary A.4]{BuHe1} any L-parameter for (an inner form of) 
$\GL_n (F)$ is essentially tame if the residual characteristoc of $F$ does not 
divide $n$. Our definition is such that the same holds for Langlands parameters 
for (inner forms of) $\SL_n (F)$.

For such L-parameters the LLC does preserve depths:

\begin{thm}\label{thm:essTameDepth}
Let $\phi \in \Phi (\SL_n (F))$ be essentially tame and relevant for
$\GL_m (D)_\der$. Then $d(\pi) = d(\phi)$ for every 
$\pi \in \Pi_\phi (\GL_m (D)_\der)$.
\end{thm}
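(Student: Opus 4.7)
The plan is to use Corollary~\ref{cor:3.3} for the inequality $d(\pi) \geq d(\phi)$ and to establish the reverse inequality by producing a lift of $\phi$ to $\GL_n(\C)$ of depth at most $d(\phi)$. Given $\pi \in \Pi_\phi(\GL_m(D)_\der)$, I would use Proposition~\ref{prop:3.2} to pick $\overline\pi \in \Irr(\GL_m(D))$ containing $\pi$ upon restriction and of minimal depth among its twists $\overline\pi \otimes \chi\circ\Nrd$, so that $d(\overline\pi) = d(\pi)$. Setting $\overline\phi := \rec_{D,m}(\overline\pi)$, Theorem~\ref{thm:LLCdepthcusp} yields $d(\overline\phi) = d(\pi)$, and $\overline\phi$ is a lift of $\phi$; compatibility of $\rec_{D,m}$ with character twists (via $\mathbf a_F$) translates the minimality into $d(\overline\phi) \leq d(\overline\phi \otimes \gamma)$ for every character $\gamma$ of $\bW_F$. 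The problem therefore reduces to producing \emph{some} lift $\overline\phi'$ of $\phi$ to $\GL_n(\C)$ with $d(\overline\phi') \leq d(\phi)$.

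Set $d := d(\phi)$. For any $l > d$, the triviality of $\phi$ on $\Gal(F_s/F)^l$ forces $\overline\phi(\Gal(F_s/F)^l) \subset Z(\GL_n(\C)) = \C^\times$, yielding a central character $\chi^l \colon \Gal(F_s/F)^l \to \C^\times$, compatible under restriction. A twist $\overline\phi \otimes \gamma$ will have depth $\leq d$ precisely when $\gamma|_{\Gal(F_s/F)^l} = (\chi^l)^{-1}$ for all $l > d$ (equivalently, for some $l$ just above $d$). Because every character of $\bW_F$ factors through $\mathbf a_F \colon \bW_F \to F^\times$, and Theorem~\ref{thm:ramArtin} identifies $\mathbf a_F \colon \Gal(F_s/F)^l \twoheadrightarrow U_F^{\lceil l \rceil}$, constructing $\gamma$ splits into two tasks: first, showing that $\chi^l$ descends through $\mathbf a_F$ to a character of $U_F^{\lceil l \rceil}$; second, extending that character to one of $F^\times$. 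The second task is automatic by Pontryagin duality applied to the compact subgroup $U_F^{\lceil l \rceil}$ of the locally compact abelian group $F^\times$.

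Essential tameness enters in the first task. It gives a direct sum decomposition $\overline\phi|_{\mathbf P_F} = \psi_1 \oplus \cdots \oplus \psi_n$ into characters of $\mathbf P_F$, with $\psi_i|_{\Gal(F_s/F)^l} = \chi^l$ for every $i$ and every $l > d$; in particular $\chi^l$ is $\bW_F$-invariant (its image is central) and extends to a character of $\mathbf P_F$. Moreover $(\chi^l)^n = \det\overline\phi|_{\Gal(F_s/F)^l}$ does factor through $\mathbf a_F$, because $\det\overline\phi$ is a character of $\bW_F$. Since $\Gal(F_s/F)^l$ is pro-$p$, the values of $\chi^l$ on $\ker(\mathbf a_F|_{\Gal(F_s/F)^l})$ must lie in $\mu_n \cap \mu_{p^\infty}$; when $p \nmid n$ this intersection is trivial and the desired factorization follows immediately. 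The general case, where $p$ may divide $n$, requires genuinely using that $\overline\phi(\mathbf P_F)$ lies in a maximal torus: one pulls back to a tame Galois subextension of $F_s/F$ over which $\overline\phi|_{\mathbf P_F}$ becomes diagonal, and applies local class field theory for that subextension together with the $\bW_F$-invariance of $\chi^l$ to push the factorization through.

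The main obstacle is exactly this last step. When $p \mid n$, the $\bW_F$-invariance of $\chi^l$ alone is not enough to conclude that $\chi^l$ is trivial on $\Gal(F_s/F)^l \cap \overline{[\bW_F,\bW_F]}$; one must exploit essential tameness via local class field theory on a suitable tame subextension to close the gap between the commutator subgroup $[\bW_F,\Gal(F_s/F)^l]$ (on which $\chi^l$ is visibly trivial) and the full kernel of $\mathbf a_F|_{\Gal(F_s/F)^l}$. Once the factorization is in hand, extending $(\chi^l)^{-1}$ to a character of $F^\times$ and pulling back under $\mathbf a_F$ produces the desired $\gamma$, so that $\overline\phi \otimes \gamma$ is a lift of $\phi$ of depth at most $d(\phi)$, completing the argument.
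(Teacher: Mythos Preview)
Your overall strategy matches the paper's exactly: reduce via Corollary~\ref{cor:3.3}, Proposition~\ref{prop:3.2} and Theorem~\ref{thm:LLCdepthcusp} to the statement that a depth-minimal lift $\overline\phi$ of $\phi$ has $d(\overline\phi)=d(\phi)$, then argue by contradiction that if $d(\overline\phi)>d(\phi)$ one can twist $\overline\phi$ by a character of $\bW_F$ to strictly lower the depth. Where you diverge from the paper is in how you try to manufacture that twist, and this is where your proposal has a genuine gap.

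You attempt to work directly with the central character $\chi^l$ on $\Gal(F_s/F)^l$ and show it factors through $\mathbf a_F$. You correctly isolate the obstruction: when $p\mid n$ the determinant trick only shows $\chi^l$ takes values in $\mu_{p^\infty}$ on $\ker\mathbf a_F$, and $\bW_F$-invariance alone does not bridge the gap between $[\bW_F,\Gal(F_s/F)^l]$ and the full closure of $[\bW_F,\bW_F]$ inside $\Gal(F_s/F)^l$. Your proposed remedy---pass to a tame subextension over which $\overline\phi|_{\mathbf P_F}$ diagonalizes and ``push the factorization through''---is pointed in the right direction but is not an argument: you have not said what property of the tame extension forces a $\Gal(E/F)$-invariant character of the relevant unit group to descend to $F$.

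The paper closes this gap by a different organization. Rather than working with $\chi^l$ on $\bW_F$, it invokes the structure theorem for essentially tame parameters \cite[Theorem~A.3]{BuHe1}: in the irreducible case $\overline\phi=\mathrm{ind}_{\bW_E}^{\bW_F}\xi$ with $E/F$ finite tame Galois and $\xi$ a character of $\bW_E$. The hypothesis $d(\overline\phi)>d(\phi)$ forces $\xi$ to be $\Gal(E/F)$-invariant on $U_E^l$ for some integer $l>0$. The decisive input is then that, because $E/F$ is \emph{tame} and $l\in\Z_{>0}$, the $\Gal(E/F)$-module $U_E^l$ is cohomologically trivial; by \cite[Lemma~A.1]{BuHe1} any $\Gal(E/F)$-invariant character of such a module factors through $\mathrm N_{E/F}$. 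This produces a character of $U_F^{\lceil l\rceil}$, which one extends to $F^\times$ and pulls back to the desired twist. The general (reducible) case is then handled by decomposing $\overline\phi|_{\bW_F}$ into irreducible essentially tame pieces and running the same construction simultaneously. So the missing ingredient in your sketch is precisely the cohomological triviality of $U_E^l$ for tame $E/F$, applied via \cite[Lemma~A.1]{BuHe1}; once you insert that, your outline becomes the paper's proof.
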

\begin{proof} 
Let $\overline{\phi}$ be as in \eqref{eq:3.2}, so $d(\overline \phi) = d(\pi)$.

First we consider the case where $\overline \phi$ is an irreducible 
$n$-dimensional representation of $\bW_F$. By \cite[Theorem A.3]{BuHe1}
there exist a finite, tamely ramified Galois extension $E / F$ and a smooth
character $\xi : \bW_E \to \C^\times$ such that 
$\overline \phi = \mathrm{ind}_{\bW_E}^{\bW_F} \xi$. We may and will assume
that $E$ is contained in our chosen separable closure $F_s$ of $F$.
By Mackey's induction--restriction formula
\[
\mathrm{Res}_{\bW_E}^{\bW_F} (\overline \phi) = \bigoplus\nolimits_{s 
\in \bW_F / \bW_E} \xi^s \text{ , where} \quad \xi^s (w) = \xi (s^{-1} w s) .
\]
The elements of $\bW_F \setminus \bW_E$ permute the $\bW_E$-subrepresentations
$\xi^s$ nontrivially, so they cannot lie in the kernel of $\overline \phi $:
\[
\ker \overline \phi = \{ w \in \bW_E : \xi^s (w) = 1 \; \forall s \in \bW_F \} .
\]
Let pr$: \GL_n (\C) \to \PGL_n (\C)$ be the canonical projection. Then 
$\phi = \mathrm{pr} \circ \overline \phi$ and
\begin{equation}\label{eq:3.3}
\ker \phi = \overline{\phi}^{-1} \big( Z (\GL_n (\C)) \big) = 
\{ w \in \bW_E : \xi^s (w) = \xi (w) \; \forall s \in \bW_F \} .
\end{equation}
Suppose that $d(\overline \phi) > d(\phi)$. In view of the definition of $d(\phi)$,
\begin{equation}\label{eq:3.4}
\ker \overline{\phi} \supset \bW_E \cap \mathrm{Gal}(F_s / F)^{d(\overline \phi)+}
\text{, but } \ker \overline{\phi} \not\supset 
\bW_E \cap \mathrm{Gal}(F_s / F)^{d(\overline \phi)} \subset \ker \phi .
\end{equation}
The relation between the upper and the lower numbering of the filtration subgroups
of $\bW_F$ \cite[\S IV.3]{Ser}, combined with the compatibility of the lower 
numbering with subgroups \cite[Proposition IV.1.2]{Ser}, provides a 
$l \in \R_{\geq 0}$ such that 
\begin{equation}\label{eq:3.5}
\bW_E \cap \mathrm{Gal}(F_s / F)^{d(\overline \phi)} = \mathrm{Gal}(F_s / E)^l .
\end{equation}
In fact $l > 0$ because $d(\overline \phi) > d (\phi) \geq 0$.
Since $\mathrm{Res}^{\bW_F}_{\bW_E} \overline{\phi}$ is a direct sum of characters, 
it factors through the Artin reciprocity map $\mathbf a_E : \bW_E \to E^\times$. With
\eqref{eq:3.4} we see that
\[
\mathbf a_E ( \mathrm{Gal}(F_s / E)^l ) \neq \mathbf a_E (\mathrm{Gal}(F_s / E)^{l+} ) .
\]
By Theorem \ref{thm:ramArtin} applied to $F_s / E ,\; l$ must be a positive integer. 
When we transfer the conjugation action
of $\bW_F$ on $\bW_E$ to $E^\times$ via Artin reciprocity, it becomes the standard
action of Gal$(E/F) \cong \bW_F / \bW_E$ on $E^\times$. Now \eqref{eq:3.3} says
that $\xi$ is a Gal$(E/F)$-invariant character of $U_E^l$. Since $l \in \Z_{>0}$
and $E / F$ is tamely ramified, $U_E^l$ is a cohomologically trivial Gal$(E/F)$-module.
According to \cite[Lemma A.1]{BuHe1}, these properties imply that $\xi$ factors
through the norm map $\rN_{E/F}$, and there is a unique smooth character 
\[
\xi' \text{ of } U_E^l \cap F^\times = \rN_{E/F}(U_E^l) \text{ such that }
\xi = \xi' \circ \rN_{E/F} \text{ on } U_E^l.
\]
Since $F^\times / \ker (\xi')$ is a finitely generated abelian group and $\C^\times$
is divisible, we can extend $\xi'$ to a smooth character of $F^\times$. Via Artin
reciprocity this yields a character $\xi_F$ of $\bW_F$. From \eqref{eq:3.5} and 
the commutative diagram \cite[\S XI.3]{Ser}
\[
\xymatrix{
\bW_E \ar[r] \ar[d]_{\mathbf a_E} & \bW_F \ar[d]^{\mathbf a_F} \\
E^\times \ar[r]^{\rN_{E/F}} & F^\times
}
\]
we see that $\xi_F = \xi$ on Gal$(F_s / E)^{d(\overline \phi)} \cap \bW_E$. Then
$\overline \phi \otimes \xi_F^{-1}$ is another lift of $\phi$ to $\Phi (\GL_n (F))$,
and $\ker \overline \phi \otimes \xi_F^{-1}$ contains 
Gal$(F_s / E)^{d (\overline \phi)}$. Thus $d(\overline \phi \otimes \xi_F^{-1}) <
d(\overline \phi)$, which contradicts the definition of $\overline \phi$. We have
shown that $d(\overline \phi) = d(\phi)$ if $\overline{\phi}|_{\bW_F}$ is irreducible.

For a general essentially tame parameter $\overline \phi$ for $\GL_n (F) ,\;
\overline{\phi} |_{\bW_F}$ is a direct sum of irreducible essentially tame 
parameters $\psi_i$ for $\GL_{n_i}(F)$, with $n_i \leq n$. Writing $\psi_i = 
\mathrm{ind}_{\bW_{E_i}}^{\bW_F} \xi_i$ as above, we obtain from \eqref{eq:3.3} that
\begin{equation}\label{eq:3.8}
\ker \phi = \{ w \in \cap_i \bW_{E_i} : \xi_i^s (w) = \xi_j (w)
\text{ for all } i,j \text{ and all } s \in \bW_F \} .
\end{equation}
In general this is smaller than
\[
\ker (\oplus_i \mathrm{pr}_i \circ \psi_i) =  \{ w \in \cap_i \bW_{E_i} : 
\xi_i^s (w) = \xi_i (w) \text{ for all } i \text{ and all } s \in \bW_F \} ,
\]
where $\mathrm{pr}_i : \GL_{n_i}(\C) \to \PGL_{n_i} (\C)$ denotes the canonical 
projection. Comparing all these kernels we deduce that
\begin{equation}\label{eq:3.6}
\max_i d(\psi_i) = d (\overline \phi) \geq d (\phi) \geq
d(\oplus_i \mathrm{pr}_i \circ \psi_i) = \max_i d (\mathrm{pr}_i \circ \psi_i ) .
\end{equation}
However, we cannot just twist $\overline \phi$ with a character of $\bW_F$ derived 
from the most ramified of the $\psi_i$ as in the irreducible case, because that 
could make the depth of another $\psi_j$ much larger. 

We suppose once again that $d(\overline \phi) > d(\phi)$. Then 
\[
\ker \overline{\phi} \supset \mathrm{Gal}(F_s / F)^{d(\overline \phi)+}
\text{, but } \ker \overline{\phi} \not\supset 
\mathrm{Gal}(F_s / F)^{d(\overline \phi)} \subset \ker \phi . 
\]
By \eqref{eq:3.8} all the $\xi_i$
agree on Gal$(F_s / F)^{d(\overline \phi)}$. The above method produces characters
$\xi'_i$ of $U_{E_i}^{d_i} \cap F^\times$, which agree on $\mathbf a_F \big( 
\bW_F^{d(\overline \phi)} \big)$. Put $\xi' = \xi'_i |_{\mathbf a_F \big( 
\bW_F^{d(\overline \phi)} \big)}$. Now the same argument as in the irreducible
case leads to a contradiction with \eqref{eq:3.2}. 
\end{proof}


\begin{thebibliography}{99}

\bibitem[ABPS1]{ABPS1} A.-M. Aubert, P. Baum, R.J. Plymen, M. Solleveld, 
``Geometric structure and the local Langlands conjecture",
arXiv:1211.0180. 



\bibitem[ABPS2]{ABPS3} A.-M. Aubert, P.F. Baum, R.J. Plymen, M. Solleveld,
``The local Langlands correspondence for inner forms of $\SL_n$",
preprint, arXiv:1305.2638, 2013.


\bibitem[Bad]{Bad1} A. I. Badulescu, ``Correspondance de Jacquet-Langlands pour
les corps locaux de caract\'eristique non nulle", Ann. Sci. \'Ec. Norm.
Sup. (4) {\bf 35} (2002), 695--747.



\bibitem[BaBr]{BaBr} A. I. Badulescu, with 
an appendix by P. Broussous, ``Un th\'eor\`eme de finitude", Compositio Math.  
\textbf{132} (2002) 177--190.

\bibitem[Bor]{Bor} A. Borel, ``Automorphic L-functions",
Proc. Symp. Pure Math \textbf{33.2} (1979), 27--61.

\bibitem[Bro]{Bro} P. Broussous, ``Minimal strata for $\GL_m(D)$",
J.~reine angew. Math. \textbf{514} (1999), 199-236.

\bibitem[BrLe]{BrLe} P. Broussous, B. Lemaire, ``Building of $\GL(m,D)$
and centralizers", Transformations Groups \textbf{7}, No. 1 (2002), 15--50.


\bibitem[BuFr1]{BuFrLN} C.J. Bushnell, A. Fr\"olich, 
``Gauss sums and $p$-adic division algebras", 
Lecture Notes in Math. {\bf 987}, Springer-Verlag, 1983.

\bibitem[BuFr2]{BuFr} C.J. Bushnell, A. Fr\"olich, ``Non-abelian congruence
Gauss sums and $p$-adic simple algebras", Proc. London Math. Soc.
(3) \textbf{50} (1985), 207--264.

\bibitem[BuHe1]{BuHe1} C.J. Bushnell, G. Henniart,
``The essentially tame local Langlands correspondence, I'',
J. Amer. Math. Soc. {\bf 18.3} (2005), 685--710.

\bibitem[BuHe2]{BuHe} C.J. Bushnell, G. Henniart, 
\emph{The local Langlands conjecture for $\GL(2)$}, 
Grundlehren der mathematischen Wissenschaften, {\bf 335}, Springer-Verlag, 2006.


\bibitem[BuKu]{BuKu2} C.J. Bushnell, P.C. Kutzko, ``The admissible dual of $\SL(N)$ I".
Ann. scient. \'Ec. Norm. Sup. (4) {\bf 26} (1993), 261--280

\bibitem[ChKa]{ChKa} T.H.~Chen, M.~Kamgarpour,
``Preservation of depth in local geometric Langlands correspondence", arXiv: 1404.0598.

\bibitem[DBRe]{DBRe} S. DeBacker, M. Reeder,
``Depth zero supercuspidal L-packets and their stability", 
Ann. of Math. {\bf 169.3} (2009), 795--901.


\bibitem[DKV]{DKV} P. Deligne, D. Kazhdan, M.-F.~Vigneras,
``Repr\'esentations des alg\`ebres centrales simples $p$-adiques", pp. 33--117 in:
\emph{Repr\'esentations des groupes r\'eductifs sur un corps local},
Travaux en cours, Hermann, 1984.

\bibitem[Gan]{Gan2} R. Ganapathy, ``The local Langlands correspondence for $GSp_4$ 
over local function fields", preprint, arXiv:1305.6088, 2013.

\bibitem[GoJa]{GoJa} R. Godement, H. Jacquet, \emph{Zeta functions of simple algebras},
Lecture Notes in Mathematics \textbf{260}, Springer-Verlag, 1972. 

\bibitem[GSZ]{GSZ} M. Grabitz, A.J. Silberger, W. Zink, 
``Level zero types and Hecke algebras for local central simple algebras",
J. Number Theory {\bf 91} (2001), 92--125.

\bibitem[GrRe]{GrRe}  B.H. Gross, M. Reeder, ``Arithmetic invariants of
discrete Langlands parameters", Duke Math. J. \textbf{154} (2010), 431--508.

\bibitem[HaTa]{HaTa} M. Harris, R. Taylor, ``The geometry and cohomology of some
simple Shimura varieties", Annals of Math. Studies {\bf 151},
Princeton University Press, Princeton NJ, 2001.

\bibitem[Hen]{Hen0} G.~Henniart, ``On the local Langlands conjecture for
$\GL(n)$: The cyclic case", Ann. of Math. \textbf{123} (1986), 143--203.

\bibitem[Hen2]{Hen2} G. Henniart, ``Une preuve simple de conjectures de Langlands pour 
$\GL(n)$ sur un corps $p$-adique", Inv. Math. \textbf{139} (2000), 439--455.

\bibitem[HiSa]{HiSa} K. Hiraga, H. Saito, ``On L-packets for inner forms of $\SL_n$",
Mem. Amer. Math. Soc. 1013, Vol. \textbf{215} (2012).

\bibitem[Jac]{Jac} H. Jacquet, ``Principal L-functions of the linear group",
Proc. Symp. Pure Math \textbf{33.2} (1979), 63--86.

\bibitem[JaLa]{JaLa} H. Jacquet, R. Langlands, \emph{Automorphic forms on $\GL(2)$},
Lecture Notes in Mathematics {\bf 114}, Springer-Verlag, 1970



\bibitem[Kot]{Kot} R.E. Kottwitz, ``Stable trace formula: cuspidal tempered terms",
Duke Math. J. \textbf{51.3} (1984), 611--650.


\bibitem[LaRa]{LaRa} J.~Lansky, A.~Raghuram, ``On the correspondence of
representations between $\GL(n)$ and division algebras", Proc. Amer. Math.
Soc. \textbf{131.5} (2002), 1641--1648.
 
\bibitem[LRS]{LRS} G. Laumon, M. Rapoport, U. Stuhler, ``$\mathcal D$-elliptic
sheaves and the Langlands correspondence", Invent. Math. {\bf 113} (1993), 217--238.


 
\bibitem[Lus1]{Lus1} G. Lusztig, ``Classification of unipotent representations
of simple $p$-adic groups", Int. Math. Res. Notices {\bf 11} (1995), 517--589. 
 
\bibitem[Lus2]{Lus2} G. Lusztig, ``Classification of unipotent representations of simple 
$p$-adic groups II", Represent. Theory \textbf{6} (2002), 243--289.

\bibitem[M\oe]{Moe}
C.~M\oe glin, ``Stabilit\'e en niveau $0$, pour les groupes orthogonaux impairs
$p$-adiques", Documenta Math. {\bf 9} (2004), 527--564.

\bibitem[Mor]{Mor} L. Morris, ``Tamely ramified intertwining algebras",
Invent. Math. {\bf 114.1} (1993), 1--54.

\bibitem[MoPr1]{MoPr1} A. Moy, G. Prasad, ``Unrefined minimal $K$-types for p-adic groups",
Inv. Math. {\bf 116} (1994), 393--408.

\bibitem[MoPr2]{MoPr2} A. Moy, G. Prasad, ``Jacquet functors and unrefined minimal K-types",
Comment. Math. Helvetici \textbf{71} (1996), 98--121.

\bibitem[Ree]{Ree} M. Reeder, ``Supercuspidal L-packets of positive depth and 
twisted Coxeter elements", J. reine angew. Math. {\bf 620} (2008), 1--33.

\bibitem[ReYu]{ReYu} M. Reeder, J.-K. Yu,
``Epipelagic representations and invariant theory", 
 J. Amer. Math. Soc. \textbf{27} (2014), 437--477. 



\bibitem[S\'eSt1]{SeSt4} V. S\'echerre, S. Stevens, ``Repr\'esentations lisses de 
$\GL_m (D)$ IV: repr\'esentations supercuspidales", J. Inst. Math. Jussieu {\bf 7.3}
(2008), 527--574.


\bibitem[Ser]{Ser} J.-P. Serre, {\em Corps locaux}. Hermann, Paris, 1962.

\bibitem[Tad]{Tad} M.~Tadi\'c, ``Induced representations of $\GL(n,A)$ for
$p$-adic division algebras $A$", J.~reine angew. Math. \textbf{405} (1990), 48--77.

\bibitem[Tat]{Tat} J. Tate, ``Number theoretic background",
Proc. Symp. Pure Math \textbf{33.2} (1979), 3--26.

\bibitem[Vog]{Vog} D. Vogan, ``The local Langlands conjecture", pp. 305--379 in:
\emph{Representation theory of groups and algebras},
Contemp. Math. \textbf{145}, American Mathematical Society, Providence RI, 1993.

\bibitem[Wei]{Wei} A. Weil, ``Exercices dyadiques", Inv. Math. \textbf{27} (1974), 1--22.


\bibitem[Yu1]{Yu1} J.-K. Yu, ``Bruhat-Tits theory and buildings",
pp. 53--77 in: \emph{Ottawa lectures on admissible representations of reductive p-adic groups},
Fields Institute Monographs, American Mathemical Society, 2009

\bibitem[Yu2]{Yu2} J.-K. Yu, ``On the local Langlands correspondence for tori",
pp. 177--183 in: \emph{Ottawa lectures on admissible representations 
of reductice p-adic groups},
Fields Institute Monographs, American Mathemical Society, 2009
	  
\bibitem[Zel]{Zel} A.V. Zelevinsky, ``Induced representations of reductive $p$-adic groups II.
On irreducible representations of $\GL(n)$",
Ann. Sci. \'Ecole Norm. Sup. (4) \textbf{13.2} (1980), 165--210.

\end{thebibliography}
\end{document}